 \newtheorem{thm}{Theorem}[section]
 \newtheorem{lem}[thm]{Lemma}
 \theoremstyle{definition}
 \newtheorem{defn}[thm]{Definition}
  \newtheorem{prop}[thm]{Proposition}
 \theoremstyle{remark}
 \numberwithin{equation}{section}
\DeclareMathOperator*\dif{\mathop{}\!\mathrm{d}}
\newcommand{\E}{\mathbb{E}_i}
\newcommand{\dd}{\,{\dif}}
\newcommand{\ds}{\,{\dif}s}
\newcommand{\dr}{\,{\dif}r}
\newcommand{\dwr}{\,{\dif}W_r}
\newcommand{\dbt}{\,{\dif}B_t}
\newcommand{\dbs}{\,{\dif}B_s}
\newcommand{\dbr}{\,{\dif}{B}_r}
\begin{document}

\title{A predictor-corrector deep learning algorithm for high dimensional stochastic partial differential equations}

\author[1]{He Zhang \thanks{\texttt{Email: he$\_$zhang@jlu.edu.cn.}~The work of this author was supported in part by NSFC Grant 12201242.}}
%The work of this author was supported in part by NSFC Grant by  NSFC (Grant Nos. 12201242, 12071175) and the National Research Program of China
%(Grant No. 2013CB834100), by the Natural Science Foundation of Jilin Province
%(Grant Nos. 20200201253JC, 201902013020JC), and by the Project of Science and
%Technology Development of Jilin Province, China (Grant No. 2017C028-1).
\author[1]{Ran Zhang\thanks{\texttt{Email: zhangran@jlu.edu.cn.}~Corresponding author.~The work of this author was supported in part by NSFC Grant 11971198}}
\author[2]{Tao Zhou\thanks{\texttt{Email: tzhou@lsec.cc.ac.cn.}~ The work of this author was supported in part by NSFC Grants 11731006, 11831010 and 11871068.}}
\affil[1]{
School of Mathematics, Jilin University, Changchun, Jilin 130012, China}
\affil[2]{Institute of Computational Mathematics and Scientific/Engineering Computing, Academy of Mathematics
and Systems Science, Chinese Academy of Sciences, Beijing, 100190, China}

\date{\today}

\maketitle

\begin{abstract}
In this paper, we present a deep learning-based numerical method for approximating high dimensional stochastic partial differential equations (SPDEs). At each time step, our method relies on a predictor-corrector procedure. More precisely, we decompose the original SPDE into a degenerate SPDE and a deterministic PDE. Then in the prediction step, we solve the degenerate SPDE with the Euler scheme, while in the correction step we solve the second-order deterministic PDE by deep neural networks via its equivalent backward stochastic differential equation (BSDE). Under standard assumptions, error estimates and the rate of convergence of the proposed algorithm are presented. The efficiency and accuracy of the proposed algorithm are illustrated by numerical examples.
\\
\end{abstract}
\noindent{\bf Keywords:} Stochastic partial differential equations, deep learning, BSDEs.

\noindent{\bf AMS subject classifications:} 60H15, 60H35, 65M12

\section{Introduction}
This paper is concerned with numerical methods for the following stochastic partial differential equation (SPDE):
\begin{equation}\label{spde}
  \begin{split}
  u(t,x)=&h(x)+\int_t^T \left(\mathcal{L}u(s,x)+f(s,x,u(s,x),(\nabla u\sigma)(s,x))\right)\ds\\
  &+\int_t^Tg(s,x,u(s,x))\dbs,
  \end{split}
\end{equation}
where $u\colon [0,T]\times\mathbb{R}^d\rightarrow\mathbb{R}^k$, $\mathcal{L}u=[Lu_1,\cdots,Lu_k]^\top$ with
\begin{equation*}
L=\frac{1}{2}\sum_{i,j=1}^d(\sigma\sigma^*)_{ij}(t,x)\frac{\partial^2}{\partial x_i \partial x_j}+\sum_{i=1}^d \mu_i(t,x)\frac{\partial}{\partial x_i}.
\end{equation*}
Here $\nabla$ denotes the space gradient, $B$ is a Brownian motion defined on some complete probability space $(\Omega,\mathcal{F},P)$, and the stochastic integral ${\dif}B$ is a backward It\^{o} integral. SPDEs of this type appear in many real-world applications such as filtering problems, population genetics, and statistical hydromechanics (see e.g. \cite{MR2391779} and the references therein).

Numerical algorithms for the above SPDEs have been studied extensively in recent years. A comprehensive review of the literature can be found in Jentzen and Kloeden \cite{MR2578878}. For low dimensional problems, traditional space-time concretization strategies can be adopted to obtain reasonable numerical accuracy, and this includes the finite difference methods \cite{MR2729440, MR1480861, MR1654030}, finite element methods \cite{MR928351,MR2136207}, spectral methods \cite{MR1402994}, and Wiener chaos decomposition \cite{MR2694901}, just to name a few. However, for higher dimensional problems, all these numerical approaches are infeasible due to their prohibitive computational costs and storage requirements (i.e., the so called curse of dimensionality). Consequently, probabilistic methods have been developed in recent years to mitigate the curse of dimensionality. %In particular, Milstein and Tretyakov \cite{MR2521279} proposed a numerical scheme for linear SPDEs based on the method of characteristics and Monte Carlo techniques.
An important starting point for designing probabilistic methods is due to Pardoux and Peng \cite{MR1258986}, in which they presented a probabilistic representation of the solution of \eqref{spde} through the solution of the following forward backward doubly stochastic differential equation (FBDSDE)
\begin{align}
  X^{t,x}_s&=x+\int_t^s \mu(r,X^{t,x}_r)\dr+\int_t^s \sigma(r,X^{t,x}_r)\dwr,\label{sde1}\\
  Y^{t,x}_s&=h(X_T^{t,x})+\int_s^T f(r,X^{t,x}_r,Y^{t,x}_r,Z^{t,x}_r)\dr\nonumber\\
  &-\int_s^T Z^{t,x}_r\dwr+\int_s^T g(r,X^{t,x}_r,Y^{t,x}_r)\dd B_r,\label{bdsde}
\end{align}
where $\{X^{t,x}_s,~t\leq s\leq T\}$ denotes the forward process starting from $x$ at time $t$, $W$ is a $d$-dimensional standard Brownian motion independent of $B$ in \eqref{spde} and \eqref{bdsde}. More precisely, it was shown in \cite{MR1258986} that if $u$ is a classical solution of \eqref{spde}, then $\{(Y^{t,x}_s= u(s,X^{t,x}_s),~Z^{t,x}_s=(\nabla u\sigma)(s,X^{t,x}_s)),~t\leq s\leq T\}$ is the unique solution of \eqref{bdsde}. Based on this, various numerical methods for SPDEs using the associated FBDSDEs are proposed \cite{MR3538011,mr3483162,MR3356530,MR4092406}. In particular, when $g$ vanishes, the above FBDSDEs become the so called backward stochastic differential equations (BSDEs), for which extensive numerical algorithms have been investigated \cite{MR2056536,MR3674067,MR2152657,MR3449792,MR3454368,MR4135389,MR2023027,mr2552079}. Nevertheless, designing efficient algorithms for high dimensional problems is still very challenging.

More recently, deep learning-based numerical methods have attracted great attention for dealing with high dimensional problems. In particular, E et al \cite{MR3627592,MR3847747} presented a deep leaning method for high-dimensional parabolic PDEs by using the nonlinear Feyman-Kac formula \cite{MR1176785}. This approach was extended to fully nonlinear second-order PDEs in \cite{MR3993178}. Backward induction schemes for high dimensional PDEs and variational inequalities were developed in \cite{MR4081911}. A multilevel Picard iteration scheme was derived in \cite{MR3946468} by combining the Feynman-Kac and Bismut-Elworthy-Li formulae with a multilevel Picard approximation.

Although there has been extensive research on deep learning-based numerical approximations for PDEs \cite{MR4188517,FZZ_2022,MR4402262,MR3881695,MR3874585}, however, very little  attention has been paid to SPDEs. One can refer to \cite{beck2020deep} for numerical attempts on approximating solutions of initial value problems of SPDEs by neural networks. We also mention the work of Teng et al \cite{teng2021solving} on deep learning-based numerical scheme for FBDSDEs, where they define equivalent control problems by treating $Y_t$ and $Z_t$ in FBDSDE as controls, and use deep neural networks as approximation tools for these controls.

In this work, we shall present a predictor-corrector (or, an operator spliting) deep learning scheme for the weak solution of \eqref{spde}. Consider the following partition of $[0,T]:$  $$\pi^N\colon0=t_0<t_1<\cdots<t_N=T.$$  Our approach generates an approximated sequence $\{u_i(x)\}_{i=0}^N$ defined by
\begin{equation}\label{sequence}
u_N=h(x),\quad  u_i =\Phi_{\Delta t_i}\Psi_{t_i,t_{i+1}}u_{i+1}, \quad i=N-1,\cdots,0,
\end{equation}
where $\Delta t_i=t_{i+1}-t_i$, $\{\Phi_{t},~t\geq0\}$ and $\{\Psi_{st},~0\leq s\leq t\}$ denote the solution operators associated with the equations
\begin{equation}\label{sde2}
  {\dif}\psi+g(t,x,\psi(t,x))\dbt=0,~\lim\limits_{t\uparrow t_{i+1}}\psi_{t}=u_{i+1}=\varphi_{t_{i+1}},
\end{equation}
and
\begin{equation}\label{pde}
\partial_t\varphi+\mathcal{L}\varphi(t,x)+f(t,x,\varphi(t,x),(\nabla \varphi\sigma)(t,x))=0,~\lim\limits_{t\uparrow t_{i+1}}\varphi_{t}=\psi_{t_{i}},
\end{equation}
respectively.  Notice that at each time step, the SDE \eqref{sde2} serves as a predictor by providing an \emph{a priori} estimate of the solution at the next time step. Then we adopt its solution at $t_{i}$ as the terminal condition at $t_{i+1}$ of the PDE \eqref{pde}, and refine the initial estimate. Consequently, $u_i(x)=\varphi_{t_{i}}(x)$ is an approximation of the solution $u(t_i,X_{t_i}^{t,x}).$ We employ the Euler scheme for the SDE \eqref{sde2}, and the numerical approximation of the PDE \eqref{pde} is of primary interest. Inspired by \cite{MR3627592,MR3847747}, we shall propose to approximate \eqref{pde} by a deep neural network at each time step via its equivalent BSDE. Besides proposing the splitting scheme, our second contribution shall be the sharp convergence analysis of the proposed algorithm.

The main motivation for proposing the above splitting strategy is that it not only improves efficiency, but also provides remarkable flexibility in choosing different numerical methods for each sub-problem. Similar splitting ideas for linear SPDEs such as the Zakai equation have been discussed in \cite{MR1180784,MR1075210,MR2056536,MR1964941,MR1176783,MR1358091}.

%We also emphasize the connection between this approximation strategy and the uncertainty quantification problems in deep neural network \cite{dcds}. With properly chosen $f$ and $g$, the predictor \eqref{sde2} describes the dynamics of hidden units in continuous-depth residual networks with uncertainty, while the corrector \eqref{pde} is the corresponding adjoint equation.

The rest of the paper is organized as follows. In section 2, we provide with some preliminaries and present the discrete time approximation scheme for \eqref{spde} and its error estimates. In section 3, we present the deep learning algorithm based on the discrete time approximations in section 2. The convergence analysis of the proposed algorithm and the error estimates in terms of the approximation errors of the neural networks will also be presented. Numerical examples are presented in Section 4, and this is followed by some concluding remarks in Section 5.

\section{Preliminaries}

Throughout the paper, we denote by $(\Omega,\mathcal{F},P)$ a probability space on which two mutually independent standard Brownian motions $W$ and $B$ (with values in $\mathbb{R}^d$ and $\mathbb{R}^l$, respectively) are defined.  Let $\mathcal{F}_{s,t}^W$ and $\mathcal{F}_{s,t}^B$ be the
 the usual P-augmentation of $\sigma\{W_r-W_s;s\leq r\leq t\}$ and $\sigma\{B_r-B_s;s\leq r\leq t\}$, respectively. For each $t\in[0,T]$, we define two collections $\{\mathcal{F}_t\}_{0\leq t\leq T}$ and $\{\mathcal{G}_t\}_{0\leq t\leq T}$  by
\begin{equation*}
  \mathcal{F}_t:=\mathcal{F}_{0,t}^W\vee \mathcal{F}_{t,T}^B,~\text{and}~\mathcal{G}_{t}:=\mathcal{F}_{0,t}^W\vee \mathcal{F}_{0,T}^B.
\end{equation*}
Note that $\{\mathcal{F}_t\}_{0\leq t\leq T}$ is neither increasing nor decreasing, while $\{\mathcal{G}_t\}_{0\leq t\leq T}$ is an increasing filtration. We also denote by $|x|=(\sum_{i=1}^n|x_i|^2)^{1/2}$ the Euclidean norm of a vector $x\in\mathbb{R}^n$ and by $|A|=(\sum_{i}\sum_{j}|a_{ij}|^2)^{1/2}$ the Frobenius norm for a matrix $A\in\mathbb{R}^{m\times n}.$
Moreover, the following functional spaces will be frequently used:
\begin{itemize}
\item $C^k(\mathbb{R}^m;\mathbb{R}^n):$ the space of $C^k$-functions from $\mathbb{R}^m$ to $\mathbb{R}^n$;
\item $C^k_c(\mathbb{R}^m;\mathbb{R}^n):$ the space of compactly supported $C^k$-functions;
\item $C_{b}^k(\mathbb{R}^m;\mathbb{R}^n):$ the space of all $C^k$-functions whose partial derivatives up to order $k$ are bounded.
\item $L^p_{\mathcal{F}}(0,T;\mathbb{R}^n) (1\leq p<\infty):$ the space of all $\mathcal{F}_t$-measurable  $\mathbb{R}^n$-valued processes $\varphi(\cdot)$ such that
$$\mathbb{E}\left[\int_0^T |\varphi_t|^p\,{\dif}t\right]<\infty.$$
\item $L^p_{\mathcal{F}}(\Omega,C([0,T];\mathbb{R}^n)):$ the space of all $\mathcal{F}_t$-measurable $\mathbb{R}^n$-valued continuous processes $\varphi(\cdot)$ such that $$\mathbb{E}\left[\sup\limits_{0\leq t\leq T} |\varphi_t|^p\right]<\infty.$$
\item $L^2_{\omega}(\mathbb{R}^n):$  a weighted Hilbert space with scalar product $$(u,v)_{\omega}=\int_{\mathbb{R}^n}u(x)v(x)\omega(x)\dd x,$$ for which the induced the norm yields $|u|^2_{\omega}:=(u,u)_{\omega}$. Here $\omega(x)$ is a continuous positive function such that $\int_{\mathbb{R}^n}(1+|x|^2)\omega(x)\dd x<\infty.$
\item $H^k_{\omega}(\mathbb{R}^n) (k\geq0):$ the completion of $C_c^\infty(\mathbb{R}^n)$ with respect to the norm $|u|^2_{k,\omega}=\sum\limits_{0\leq |\alpha|\leq k}|D^{\alpha} u|^2_{\omega}$.

\item $\mathcal{H}$: the space of $\mathcal{F}_{t,T}^B$-measurable process $\varphi_t(\cdot)$ with values in $H^1_{\omega}(\mathbb{R}^n)$ such that $$|\varphi|^2_{\mathcal{H}}:=\mathbb{E}\left[\sup\limits_{0\leq t\leq T}|\varphi_t|^2_{\omega}\right]+\mathbb{E}\left[\int_0^T|\nabla\varphi\sigma|^2_{\omega}\dd t\right]<\infty.$$
\end{itemize}
In what follows, we also make the following assumptions.
\begin{itemize}
\item \textbf{(H1)}~The functions $f$ and $g$ are jointly measurable in $(t,x,y,z)$ such that $f(\cdot,\cdot,y,z)\in L^2_{\mathcal{F}}(0,T;\mathbb{R}^k)$ and $g(\cdot,\cdot,y)\in L^2_{\mathcal{F}}(0,T;\mathbb{R}^{k\times l}).$ Furthermore, we assume that there exists a constant $K>0$ such that
\begin{equation*}
   \begin{split}
       &\sup\limits_{0\leq t\leq T}\Big(|f(t,0,0,0)|^2+|g(t,0,0)|^2\Big)\leq K,\\
       &|f(t,x,y,z)-f(s,\bar{x},\bar{y},\bar{z})|^2+|g(t,x,y)-g(s,\bar{x},\bar{y})|^2\\
       &\quad\leq K\left(|t-s|+|x-\bar{x}|^2+|y-\bar{y}|^2+|z-\bar{z}|^2\right)
   \end{split}
\end{equation*}
for any  $t,s\in[0,T], ~x,\bar{x}\in\mathbb{R}^d,~y,\bar{y}\in\mathbb{R}^k,~z,\bar{z}\in\mathbb{R}^{k\times d}.$

\item \textbf{(H2)}~The terminal condition $h\in C_b^2(\mathbb{R}^{d};\mathbb{R}^k)$ satisfies the following Lipschitz condition
\begin{equation*}
  |h(x)-h(\bar{x})|^2\leq K|x-\bar{x}|^2,~\forall x,\bar{x}\in\mathbb{R}^d.
\end{equation*}

\item \textbf{(H3)}~The functions $\mu\in C_b^2(\mathbb{R}^{d};\mathbb{R}^d)$ and $\sigma\in C_b^2(\mathbb{R}^{d};\mathbb{R}^{d\times d})$ in \eqref{sde1} are uniformly $\frac{1}{2}$-H\"{o}lder continuous in $t\in[0,T]$ and uniformly Lipschitz continuous in $x\in\mathbb{R}^d$, i.e., there exists a $K>0$ such that
\begin{equation*}
  |\mu(t,x)-\mu(s,\bar{x})|^2+|\sigma(t,x)-\sigma(s,\bar{x})|^2\leq K\left(|t-s|+|x-\bar{x}|^2\right),~\forall x,\bar{x}\in\mathbb{R}^d.
\end{equation*}
Furthermore we assume that there exists a constant $c>0$, such that
\begin{equation*}
  \xi^T\sigma(t,x)\sigma^T(t,x)\xi\geq c|\xi|^2,~\forall~x,\xi\in\mathbb{R}^d,~t\in[0,T].
\end{equation*}
\end{itemize}
Under the above assumptions, it was shown in Pardoux and Peng \cite{MR1258986} that the BDSDE \eqref{bdsde} admits a unique solution
\begin{equation*}
  (Y,Z)\in L^2_{\mathcal{F}}\left(\Omega,C([0,T];\mathbb{R}^k)\right)\times L^2_{\mathcal{F}}\left(0,T;\mathbb{R}^{k\times d}\right).
\end{equation*}
We are now ready to introduce the following definition
\begin{defn}[\cite{MR1822898}]
A function $u\in\mathcal{H}$ is called a weak solution of \eqref{spde} if for every $\phi\in C_c^{\infty}([0,T]\times \mathbb{R}^d)$, it holds
\begin{equation*}
   \begin{split}
       & \quad \int_t^T\!\!\!\int_{\mathbb{R}^d}u(s,x)\partial_s\phi(s,x)\dd x\dd s-\int_t^T\!\!\!\int_{\mathbb{R}^d}u(s,x)\mathcal{L}^* \phi(s,x)\dd x\dd s\\
       &  \quad +\int_{\mathbb{R}^d}u(t,x)\phi(t,x)\dd x-\int_{\mathbb{R}^d}h(x)\phi(T,x)\dd x\\
       & =\int_t^T\!\!\!\int_{\mathbb{R}^d}f(s,x,u(s,x),(\nabla u\sigma)(s,x))\phi(s,x)\dd x\dd s +\int_t^T\!\!\!\int_{\mathbb{R}^d}g(s,x,u(s,x))\phi(s,x)\dd x\dd B_s,
   \end{split}
\end{equation*}
where $\mathcal{L}^*$ is the adjoint of $\mathcal{L}$.
\end{defn}
We close this section by recalling the following wellposedness and regularity results on BDSDEs.
\begin{prop}[\cite{MR3538011,MR1822898}]\label{pr}
Under the assumptions \textbf{(H1)-(H3)}, there exists a unique weak solution $u\in\mathcal{H}$ of the SPDE \eqref{spde}. Moreover, $u(t, x) = Y^{t,x}_t$, where $(Y^{t,x}_s,Z^{t,x}_s)_{t\leq s\leq T}$ is the solution of the BDSDE \eqref{bdsde}. Furthermore, we have for all $s \in [t, T ]:$
\begin{equation*}
  Y^{t,x}_s=u(s,X^{t,x}_s),~Z^{t,x}_s=(\nabla u\sigma)(s,X^{t,x}_s).
\end{equation*}
\end{prop}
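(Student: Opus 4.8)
\emph{The plan} is to construct the solution probabilistically and then verify the weak formulation, splitting the argument into an existence-plus-representation part and a separate uniqueness part. First I would take as given the well-posedness of the forward SDE \eqref{sde1} under (H3) and of the BDSDE \eqref{bdsde} under (H1)--(H2); the latter is the Pardoux--Peng existence/uniqueness theorem already recalled in the excerpt, so I regard the pair $(Y^{t,x},Z^{t,x})$ in the stated spaces as in hand. I would then \emph{define} $u(t,x):=Y^{t,x}_t$ and observe that, since $Y^{t,x}_s$ is $\mathcal{F}_{t,s}^W\vee\mathcal{F}_{s,T}^B$-measurable, at $s=t$ the process $Y^{t,x}_t$ is $\mathcal{F}_{t,T}^B$-measurable, which is exactly the measurability demanded for membership in $\mathcal{H}$.

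The core of the argument is the flow/Markov representation. Using the flow property of the forward diffusion, $X^{s,X^{t,x}_s}_r=X^{t,x}_r$ for $r\ge s$, together with uniqueness for \eqref{bdsde} on $[s,T]$ (same terminal datum and driver along the same path), I would obtain $Y^{t,x}_s=Y^{s,X^{t,x}_s}_s=u(s,X^{t,x}_s)$ for all $s\in[t,T]$. Establishing the companion identity $Z^{t,x}_s=(\nabla u\,\sigma)(s,X^{t,x}_s)$ is more delicate and requires differentiating the FBDSDE in the initial datum $x$: under the $C_b^2$ hypotheses in (H2)--(H3) the maps $x\mapsto X^{t,x}_s$ and $x\mapsto(Y^{t,x}_s,Z^{t,x}_s)$ are mean-square differentiable, the derivative processes solve the associated variational FBDSDE, and one identifies $\nabla_x Y^{t,x}_s=\nabla u(s,X^{t,x}_s)\,\nabla X^{t,x}_s$; matching the $W$-martingale parts then yields $Z^{t,x}_s=(\nabla u\,\sigma)(s,X^{t,x}_s)$. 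These differentiability estimates, carried through the weight $\omega$, are also what deliver $|u|_{\mathcal{H}}<\infty$ and hence $u\in\mathcal{H}$, with the uniform ellipticity $\sigma\sigma^\top\ge cI$ of (H3) controlling the $\nabla u\sigma$ contribution.

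With the representation in hand, I would verify that $u$ is a weak solution by combining it with the It\^o--Pardoux--Peng formula (the doubly-stochastic It\^o formula that correctly handles the backward $dB$ integral): writing the dynamics of $u(s,X^{t,x}_s)=Y^{t,x}_s$, pairing against an arbitrary $\phi\in C_c^\infty$, integrating in the spatial variable, and integrating by parts in $x$ to transfer $\mathcal{L}$ onto $\phi$ (producing $\mathcal{L}^*$), one recovers exactly the integral identity defining a weak solution, the $f$ and $g$ terms arising directly from the driver and from the $dB$ term of \eqref{bdsde}. For uniqueness I would take two weak solutions $u_1,u_2$ and run the energy method: apply the doubly-stochastic It\^o formula to $|u_1-u_2|^2_\omega$ (rigorously after a test-function density/regularization argument), absorb the $f$ and $g$ contributions using the Lipschitz bounds (H1)--(H2), use the coercivity in (H3) to dominate the first-order term, and close with Gronwall's inequality to conclude $u_1=u_2$ in $\mathcal{H}$.

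The hard part will be two intertwined technical cores: (i) the differentiation of the FBDSDE flow in $x$ underpinning $Z^{t,x}_s=(\nabla u\,\sigma)(s,X^{t,x}_s)$ together with the $H^1_\omega$-regularity of $u$; and (ii) the correct bookkeeping of the backward It\^o integral in the It\^o--Pardoux--Peng formula, since the filtration $\{\mathcal{F}_t\}=\{\mathcal{F}_{0,t}^W\vee\mathcal{F}_{t,T}^B\}$ is neither increasing nor decreasing, so the usual forward It\^o calculus must be replaced by its doubly-stochastic analogue and the sign/limit conventions for the $dB$ integral tracked carefully throughout.
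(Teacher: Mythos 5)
First, a point of context: the paper itself offers no proof of Proposition \ref{pr} --- it is recalled verbatim from the cited references \cite{MR3538011,MR1822898}, so your proposal has to be measured against the argument in that literature (essentially Bally--Matoussi) rather than against anything in the text. Your outline follows the classical Pardoux--Peng route: define $u(t,x):=Y^{t,x}_t$, obtain $Y^{t,x}_s=u(s,X^{t,x}_s)$ from the flow property plus uniqueness of the BDSDE, obtain $Z^{t,x}_s=(\nabla u\,\sigma)(s,X^{t,x}_s)$ by differentiating the FBDSDE in the initial datum, and then verify the weak formulation with the doubly-stochastic It\^o formula. The genuine gap is that this route is not available under the stated hypotheses. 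Assumption \textbf{(H1)} makes $f$ and $g$ only jointly measurable, Lipschitz in $(x,y,z)$ and $\tfrac12$-H\"older in $t$; they need not be differentiable in any argument. Consequently the variational (derivative) FBDSDE you invoke need not exist, the map $x\mapsto(Y^{t,x},Z^{t,x})$ need not be mean-square differentiable, and the solution $u$ is only an element of $H^1_\omega$ --- not $C^{1,2}$ --- so the It\^o--Pardoux--Peng formula cannot legitimately be applied to $u(s,X^{t,x}_s)$, either to verify the weak formulation or to run your energy/Gronwall uniqueness argument on $|u_1-u_2|^2_\omega$. This is exactly why the proposition is phrased for \emph{weak} solutions in $\mathcal{H}$: the smooth-coefficient case (where your plan would work, producing classical solutions) does not cover \textbf{(H1)}.

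The missing ingredient in the actual proof is the equivalence-of-norms lemma (Barles--Lesigne; Bally--Matoussi \cite{MR1822898}): for the flow $X^{t,x}_s$ of \eqref{sde1}, the weighted norm $\mathbb{E}\int_{\mathbb{R}^d}|v(X^{t,x}_s)|^2\omega(x)\dd x$ is comparable, uniformly in $s$, to $\int_{\mathbb{R}^d}|v(x)|^2\omega(x)\dd x$. With this tool one mollifies $f$, $g$, $h$, applies the smooth Pardoux--Peng result to get classical solutions $u^n$ with $Y^n_s=u^n(s,X_s)$ and $Z^n_s=(\nabla u^n\sigma)(s,X_s)$, uses $L^2$ stability of BDSDEs to pass to the limit in $(Y^n,Z^n)$, and then transfers that convergence through the equivalence of norms into convergence of $u^n$ in $\mathcal{H}$; this simultaneously yields $u\in\mathcal{H}$, both representation formulas with $\nabla u$ understood as a weak Sobolev derivative, and (running the same transfer in reverse) the fact that any weak solution composed with the flow solves \eqref{bdsde}, whence uniqueness of $u$ follows from uniqueness of the BDSDE rather than from a direct energy estimate on the SPDE. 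In short: your first paragraph (well-posedness of \eqref{bdsde}, definition of $u$, measurability, flow property) is sound, but the differentiation-of-the-flow step and the It\^o-formula verification both fail under \textbf{(H1)}, and the regularization-plus-equivalence-of-norms mechanism that replaces them is the core idea your proposal is missing.
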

\begin{prop}[\cite{MR3538011,MR3356530,MR1258986}]\label{regularity}
Let \textbf{(H1)-(H3)} hold. Then, for $0\leq r\leq s\leq T$, there exists a positive constant $C$ such that
\begin{equation*}
\begin{split}
   & E|Y_s^{t,x}-Y_r^{t,x}|^2 \leq C(1+|x|^2)(s-r), \\
   & E|Z_s^{t,x}-Z_r^{t,x}|^2\leq C(1+|x|^2)(s-r).
\end{split}
\end{equation*}
\end{prop}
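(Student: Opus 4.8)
The plan is to treat the two estimates separately via the forward--backward representation of Proposition \ref{pr}, after first recording the standard auxiliary bounds. Under \textbf{(H3)} the SDE \eqref{sde1} has a unique strong solution obeying the moment bound $E\sup_{t\le\theta\le T}|X^{t,x}_\theta|^2\le C(1+|x|^2)$ and the path-regularity bound $E|X^{t,x}_s-X^{t,x}_r|^2\le C(1+|x|^2)(s-r)$; both follow from the Lipschitz and linear-growth properties of $\mu,\sigma$ together with the Burkholder--Davis--Gundy (BDG) inequality and Gronwall's lemma. Under \textbf{(H1)}--\textbf{(H2)} the usual a priori estimate for the BDSDE gives $E\sup_{t\le\theta\le T}|Y^{t,x}_\theta|^2+E\int_t^T|Z^{t,x}_\theta|^2\dd\theta\le C(1+|x|^2)$. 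Finally, since $h$ and $f$ are Lipschitz, the map $x\mapsto Y^{t,x}_t=u(t,x)$ is globally Lipschitz, so $\nabla u$ is bounded; combined with $Z^{t,x}_\theta=(\nabla u\,\sigma)(\theta,X^{t,x}_\theta)$ and the boundedness of $\sigma$ from \textbf{(H3)}, this yields the uniform-in-time bound $\sup_{t\le\theta\le T}E|Z^{t,x}_\theta|^2\le C(1+|x|^2)$, which will be needed below.

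For the regularity of $Y$, I would subtract the integral form of \eqref{bdsde} at times $s$ and $r$ (with $r\le s$) to get, with arguments of the integrands suppressed,
$$Y_s-Y_r=-\int_r^s f\,\dd\theta+\int_r^s Z_\theta\,\dd W_\theta-\int_r^s g\,\dd B_\theta.$$
Taking second moments, using the isometries of the $\dd W$- and $\dd B$-integrals and Cauchy--Schwarz on the drift term, gives
$$E|Y_s-Y_r|^2\le 3(s-r)E\!\int_r^s|f|^2\dd\theta+3E\!\int_r^s|Z_\theta|^2\dd\theta+3E\!\int_r^s|g|^2\dd\theta.$$
The Lipschitz growth in \textbf{(H1)} bounds $|f|^2$ and $|g|^2$ by $C(1+|X_\theta|^2+|Y_\theta|^2+|Z_\theta|^2)$; inserting the moment estimates above, together with the uniform $Z$-bound to handle $E\int_r^s|Z_\theta|^2\dd\theta\le(s-r)\sup_\theta E|Z_\theta|^2$, makes each term $\le C(1+|x|^2)(s-r)$, which is the first claim.

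The second estimate is the genuinely hard part. My approach is to exploit the representation $Z^{t,x}_\theta=(\nabla u\,\sigma)(\theta,X^{t,x}_\theta)$ and split
$$E|Z_s-Z_r|^2\le 2E\big|(\nabla u\,\sigma)(s,X_s)-(\nabla u\,\sigma)(s,X_r)\big|^2+2E\big|(\nabla u\,\sigma)(s,X_r)-(\nabla u\,\sigma)(r,X_r)\big|^2,$$
so that the first term is controlled by the spatial regularity of $\nabla u\,\sigma$ against $E|X_s-X_r|^2\le C(1+|x|^2)(s-r)$, and the second by its $\tfrac12$-H\"older regularity in time against $(s-r)\,E(1+|X_r|^2)$. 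What this reduces to, and what I expect to be the main obstacle, is showing that $\nabla u\,\sigma$ is Lipschitz in $x$ and $\tfrac12$-H\"older in $t$ with constants of at most linear growth in $|x|$, i.e. second-order spatial and the matching temporal regularity of the SPDE solution. Under the $C_b^2$ hypotheses on $h,\mu,\sigma$ and the Lipschitz hypotheses on $f,g$ this regularity is available from the Pardoux--Peng theory \cite{MR1258986}, but establishing it rigorously at the level of merely Lipschitz $f$ is not a routine computation.

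The clean self-contained alternative, and the route I would actually carry out, is Malliavin calculus. One shows that $(X,Y,Z)$ are Malliavin differentiable, that $(D_\theta X,D_\theta Y,D_\theta Z)$ solve the linearized FBDSDE obtained by formally differentiating \eqref{sde1}--\eqref{bdsde}, that $\sup_{r\le\theta,\,s\le T}E|D_\theta Y_s|^2\le C(1+|x|^2)$ by the same a priori estimate applied to the linearized system, and that a version of $Z$ satisfies $Z_s=D_sY_s$. The time-regularity of $s\mapsto D_\theta Y_s$, proved exactly as in the $Y$-estimate above but now for the linearized equation, then transfers to $Z$ and yields $E|Z_s-Z_r|^2\le C(1+|x|^2)(s-r)$. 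Either way the crux is the regularity of the $Z$-component; once it is in hand the remaining steps are the same BDG and Gronwall bookkeeping already used for $Y$.
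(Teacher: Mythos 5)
First, a point of comparison: the paper itself gives no proof of Proposition \ref{regularity} --- it is quoted from the cited references \cite{MR3538011,MR3356530,MR1258986}, where the delicate half (the $Z$-estimate) is obtained by exactly the kind of Malliavin-calculus argument you sketch as your second route, in the spirit of Zhang's $L^2$ path-regularity theorem for BSDEs. So your proposal must be measured against those external proofs. Your $Y$-estimate is correct and essentially complete modulo standard facts: the decomposition of $Y_s-Y_r$, the two isometries, and Cauchy--Schwarz are the usual argument, and your device for obtaining the uniform bound $\sup_{\theta}\mathbb{E}|Z^{t,x}_\theta|^2\leq C(1+|x|^2)$ --- combining the representation $Z^{t,x}_\theta=(\nabla u\,\sigma)(\theta,X^{t,x}_\theta)$ from Proposition \ref{pr} with the global Lipschitz bound on $u(t,\cdot)$ coming from BDSDE stability and the boundedness of $\sigma$ in \textbf{(H3)} --- is legitimate; you were right to isolate this point, since the a priori bound $\mathbb{E}\int_t^T|Z_\theta|^2\,\mathrm{d}\theta\leq C(1+|x|^2)$ alone cannot produce the factor $(s-r)$.

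The genuine gap is in the $Z$-half, and it sits precisely at the step you compress into one sentence. Granting Malliavin differentiability and the identification $Z_s=D_sY_s$, the quantity to control is $D_sY_s-D_rY_r$, and the ``time-regularity of $s\mapsto D_\theta Y_s$ for fixed $\theta$'' handles only the piece $D_rY_s-D_rY_r$. The remaining piece, $D_sY_s-D_rY_s$, compares Malliavin derivatives taken at two \emph{different} differentiation times; these solve linearized BDSDEs with different data ($D_sX$ versus $D_rX$ entering the forcing and the terminal condition), so one needs a stability estimate for the linearized equation together with $\mathbb{E}|D_sX_\tau-D_rX_\tau|^2\leq C(1+|x|^2)(s-r)$, proved from the variation (flow) equations of \eqref{sde1}. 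This stability-in-the-differentiation-time step is the actual crux of Zhang-type proofs and is absent from your outline. Two further repairs are also needed: (i) under \textbf{(H1)} the coefficients $f,g$ are merely Lipschitz, so \eqref{bdsde} cannot be differentiated in the Malliavin sense directly --- the very objection you raise against your first route applies to the second as well; the standard fix is to mollify $f,g,h$, establish the estimate with constants depending only on Lipschitz constants, and pass to the limit; (ii) for BDSDEs the linearized equation carries an extra backward-integral term of the form $g_y\,D_\theta Y\,\mathrm{d}B$, so the a priori estimates must be re-derived in the two-filtration setting rather than quoted from the BSDE case. With these ingredients supplied your plan coincides with the proofs in the cited references; without them, the second inequality of the proposition remains unproven.
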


\section{A predictor-corrector deep learning algorithm}
In this section, based on predictor-corrector procedure \eqref{sde2}-\eqref{pde},  we shall present our predictor-corrector deep learning algorithm. To this end, we shall first discuss the temporal semi-discretizations and its error estimates, and then present the deep learning algorithm in the physical domain.
\subsection{Temporal semi-discretizations}\label{discrete scheme}
We first construct the discrete time approximations of the solution $\psi$ in \eqref{sde2} and the solution $\varphi$ in \eqref{pde}, to obtain the approximating sequence $\{u_i(x)\}_{i=0}^N$ defined in \eqref{sequence}. We begin with the approximation of SDE  \eqref{sde2}. We recall the partition of $[0,T]:$
 $$\pi^N\colon0=t_0<t_1<\cdots<t_N=T.$$  We further define the maximum partition size by $|\pi|:=\max\limits_{i}\Delta t_i$. The Euler scheme for \eqref{sde2} yields
\begin{equation}\label{psi}
  \psi_{t_i}^\pi=\varphi_{t_{i+1}}^\pi+g\left(t_{i+1},X_{t_{i+1}}^\pi,\varphi_{t_{i+1}}^\pi\right)\Delta B_i,
\end{equation}
where $\varphi_{t_{i+1}}^\pi$ and $X_{t_{i+1}}^\pi$ are approximations of $\varphi(t,x)$ and $X_t$ at $t=t_{i+1}$ which will be clarified later. In each interval $[t_i,t_{i+1})$, the PDE \eqref{pde} is solved via its equivalent BSDE
\begin{equation}\label{bsde}
  y_s^{t,x}=\psi_{t_{i}}(x)+\int_{s}^{t_{i+1}}\!\!\! f\left(r,X_r^{t,x},y_r^{t,x},z_r^{t,x}\right)\dr-\int_{s}^{t_{i+1}}\!\!\! z_r^{t,x}\dwr,~t_i\leq s< t_{i+1}.
\end{equation}
It should be noted that the family $\{(y,z)\}$ differs from $\{(Y,Z)\}$ in \eqref{bdsde}, and $y$ is discontinuous at points $t_{i}.$ We recall Theorem 3.2 in \cite{MR1176785} that shows
\begin{equation}\label{bsde-rep}
  \varphi(t,x)=y^{t,x}_t \quad \text{and} \quad  \nabla \varphi(t,x)\sigma(t,x)=z^{t,x}_t.
\end{equation}
For this BSDE, we approximate the forward process $X_t$ by the Euler scheme
\begin{equation}\label{exi}
  X^{t,x,\pi}_{t_{i+1}} =X^{t,x,\pi}_{t_i}+\mu\left(t_i,X^{t,x,\pi}_{t_i}\right)\Delta t_i+\sigma\left(t_i,X^{t,x,\pi}_{t_i}\right)\Delta W_i,
\end{equation}
Hereafter, the superscript $t,x$ will be dropped unless clarity is needed. For the above scheme, the following estimate holds (see, e.g., \cite{MR2578878}).
\begin{lem}\label{semi-sde}
Assume that $\mu$ and $\sigma$ satisfy \textbf{(H3)}. Then there exists a constant $C$, independent of $\pi$, such that
\begin{equation*}
       \max\limits_{0\leq i\leq N-1}\sup\limits_{s\in[t_i,t_{i+1})} \mathbb{E}\left[|X_{s}-X_{t_i}^\pi|^2+|X_s-X_{t_{i+1}}^\pi|^2\right]<C(1+|x|^2)|\pi|.
\end{equation*}
\end{lem}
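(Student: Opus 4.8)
The plan is to reduce the statement to the classical strong convergence rate of the Euler--Maruyama scheme at the grid nodes, combined with the pathwise $L^2$-continuity of the diffusion $X$. For $s\in[t_i,t_{i+1})$ I would first split each term by the elementary inequality $|a+b|^2\le 2|a|^2+2|b|^2$, writing
\[
\mathbb{E}|X_s-X_{t_i}^\pi|^2\le 2\mathbb{E}|X_s-X_{t_i}|^2+2\mathbb{E}|X_{t_i}-X_{t_i}^\pi|^2,
\]
and analogously for $X_{t_{i+1}}^\pi$ with $X_{t_{i+1}}$ in place of $X_{t_i}$. This isolates two phenomena: the increment of the \emph{true} process over a subinterval of length at most $|\pi|$, and the global discretization error measured at the grid nodes.

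Second, I would control the continuity terms $\mathbb{E}|X_s-X_{t_i}|^2$ and $\mathbb{E}|X_s-X_{t_{i+1}}|^2$. Using the integral form \eqref{sde1}, the boundedness of $\mu,\sigma$ granted by \textbf{(H3)}, Cauchy--Schwarz on the drift and the It\^o isometry on the diffusion, each of these is bounded by $C|s-t_i|$ respectively $C|s-t_{i+1}|$, hence by $C|\pi|$. If instead one wishes to argue under mere linear growth rather than boundedness, the a priori moment bound $\sup_{0\le s\le T}\mathbb{E}|X_s|^2\le C(1+|x|^2)$ supplies the factor $(1+|x|^2)$, which in any case serves as a safe upper bound matching the statement.

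Third --- and this is the main step --- I would establish the nodal strong error $\max_i\mathbb{E}|X_{t_i}-X_{t_i}^\pi|^2\le C(1+|x|^2)|\pi|$. Introduce the continuous-time Euler interpolation $\bar X^\pi_s=X^\pi_{t_i}+\mu(t_i,X^\pi_{t_i})(s-t_i)+\sigma(t_i,X^\pi_{t_i})(W_s-W_{t_i})$ on each $[t_i,t_{i+1}]$, so that $\bar X^\pi_{t_i}=X^\pi_{t_i}$, and set $\eta(s)=t_i$ for $s\in[t_i,t_{i+1})$. Subtracting the integral representations of $X$ and $\bar X^\pi$, applying the It\^o isometry together with the Lipschitz-in-$x$ and $\tfrac12$-H\"older-in-$t$ bounds of \textbf{(H3)}, and inserting the one-step estimate $\mathbb{E}|\bar X^\pi_s-\bar X^\pi_{\eta(s)}|^2\le C(1+|x|^2)|\pi|$ (itself a consequence of the uniform-in-$\pi$ second-moment bound $\max_i\mathbb{E}|X^\pi_{t_i}|^2\le C(1+|x|^2)$), I would arrive at an inequality of the form
\[
\mathbb{E}\sup_{u\le s}|X_u-\bar X^\pi_u|^2\le C\!\int_t^s \mathbb{E}\sup_{v\le r}|X_v-\bar X^\pi_v|^2\dr+C(1+|x|^2)|\pi|.
\]
Gronwall's inequality then gives $\mathbb{E}\sup_{s\le T}|X_s-\bar X^\pi_s|^2\le C(1+|x|^2)|\pi|$, and evaluation at the nodes yields the desired nodal bound.

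Finally, combining the three ingredients through the splitting of the first step, then taking the supremum over $s\in[t_i,t_{i+1})$ and the maximum over $i$, produces the asserted estimate with a constant $C$ independent of $\pi$. The genuinely delicate point is the third step: one must separate the contribution arising from freezing the coefficients in time --- controlled by the $\tfrac12$-H\"older modulus, whose square over an interval of length $|\pi|$ contributes exactly order $|\pi|$ --- from the state-dependent contribution that feeds the Gronwall loop, while verifying that every moment bound invoked is uniform in the partition. The remaining manipulations are routine applications of the It\^o isometry and Cauchy--Schwarz.
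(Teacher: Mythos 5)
Your proof is correct, but it is worth noting that the paper does not actually prove this lemma at all: it invokes the estimate as a standard fact about the Euler--Maruyama scheme, with a pointer to the literature (``see, e.g., \cite{MR2578878}''). What you have written is essentially the classical proof that such a reference contains: split each term into the modulus of $L^2$-continuity of the true diffusion plus the nodal strong error, bound the continuity term by $C(1+|x|^2)|\pi|$ from the integral representation of \eqref{sde1}, and obtain the nodal error of order $|\pi|$ via the continuous-time Euler interpolation, the Lipschitz/H\"older bounds of \textbf{(H3)}, and Gronwall. Two small remarks. First, since \textbf{(H3)} assumes $\mu,\sigma\in C_b^2$, the coefficients are bounded, so your linear-growth fallback is not even needed; the factor $(1+|x|^2)$ is a harmless upper bound either way, and the $\tfrac12$-H\"older-in-$t$ modulus indeed contributes exactly order $|\pi|$ as you say. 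Second, in your Gronwall step you estimate $\mathbb{E}\sup_{u\le s}|X_u-\bar X^\pi_u|^2$ but justify the stochastic-integral contribution by the It\^o isometry alone; a supremum inside the expectation requires Doob's $L^2$ maximal inequality (or the Burkholder--Davis--Gundy inequality), which only changes the constant. Alternatively, since the lemma only needs the error at fixed times, you can run Gronwall on the pointwise quantity $\mathbb{E}|X_s-\bar X^\pi_s|^2$ and avoid the supremum altogether. With either repair the argument is complete and fully consistent with the statement.
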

Next, the solution $(y,z)$ of \eqref{bsde} at $t=t_i$ is approximated by the solution $(y_{t_i}^{\pi},z_{t_i}^{\pi})$ via the following equation:
\begin{equation}\label{semidiscrete}
\begin{split}
    y_{t_i}^{\pi}&=\psi^{\pi}_{t_i}+f\left(t_i,X_{t_{i}}^\pi,y^{\pi}_{t_{i}},z_{t_{i}}^{\pi}\right)\Delta t_i-z_{t_i}^{\pi}\Delta W_i.
\end{split}
\end{equation}
Notice that the scheme in \eqref{semidiscrete} is implicit for $y$ and $z$. Thus, by \eqref{bsde-rep}, $\varphi({t_i},\cdot)$ is approximated by
\begin{equation*}
  \varphi_{t_i}^\pi=y_{t_i}^{\pi}.
\end{equation*}
To sum up, given $\{(X_{t_i}^\pi,y^{\pi}_{t_{i}},z_{t_i}^{\pi})\}_{i=0}^{N}$, the solution $\psi$ and $\varphi$ of \eqref{sde2} and \eqref{pde} is approximated by $\{\psi_{t_i}^\pi\}_{i=0}^N$ and $\{\varphi_{t_i}^\pi\}_{i=0}^N$, respectively, defined in a backward manner by $\varphi^\pi_{t_N}=h(X^{\pi}_{t_N})$,  and
\begin{equation}\label{semilinear}
   \begin{split}
   &\psi_{t_i}^\pi=\varphi_{t_{i+1}}^\pi+g(t_{i+1},X_{t_{i+1}}^\pi,\varphi_{t_{i+1}}^\pi)\Delta B_i,\\
     &  \varphi_{t_i}^\pi=y^{\pi}_{t_i}.
   \end{split}
\end{equation}
Notice that the approximate sequence $\{u_i\}_{i=0}^N$ in \eqref{sequence} is defined by
\begin{equation}\label{discretization}
  u_i(x)=\varphi_{t_i}^\pi(x), \quad i=N-1,\cdots,0.
\end{equation}

\subsection{Error estimations of temporal semi-discretizations}
In this section, we analyze the approximation error of the discrete time scheme described in Section \ref{discrete scheme}. We first investigate the numerical error between the solution of BDSDE \eqref{bdsde} and the discrete time scheme \eqref{semidiscrete}. To begin, we first present the following proposition.

\begin{prop}\label{th1}
Suppose that \textbf{(H1)}-\textbf{(H3)} hold. Then there exists a constant $C$, independent of $\pi$, $k$, $l$, and the space dimension $d$,  such that

\begin{equation}%\label{}
  \max\limits_{0\leq i\leq N-1}\mathbb{E}|Y_{t_i}^{t_i,\cdot}-y_{t_i}^\pi|^2\leq C(1+|x|^2)|\pi|.
\end{equation}
\end{prop}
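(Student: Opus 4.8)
The plan is to propagate the error $e_i:=Y_{t_i}-y^\pi_{t_i}$ backward in $i$ — identifying $Y_{t_i}=Y^{t_i,\cdot}_{t_i}=u(t_i,X_{t_i})$ through Proposition \ref{pr} — by a one–step analysis that mirrors the predictor–corrector structure of the scheme, and then to close the recursion with a discrete Gronwall argument. I would write \eqref{bdsde} on $[t_i,t_{i+1}]$ as $Y_{t_i}=\Psi_{t_i}+\int_{t_i}^{t_{i+1}}f(r,X_r,Y_r,Z_r)\dr-\int_{t_i}^{t_{i+1}}Z_r\dwr$ with exact predictor $\Psi_{t_i}:=Y_{t_{i+1}}+\int_{t_i}^{t_{i+1}}g(r,X_r,Y_r)\dbr$, and compare it with the scheme $y^\pi_{t_i}=\psi^\pi_{t_i}+f(t_i,X^\pi_{t_i},y^\pi_{t_i},z^\pi_{t_i})\Delta t_i-z^\pi_{t_i}\Delta W_i$ of \eqref{psi} and \eqref{semidiscrete}. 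Setting $\epsilon_i:=\Psi_{t_i}-\psi^\pi_{t_i}=e_{i+1}+I^g_i$, $I^g_i:=\int_{t_i}^{t_{i+1}}\!\big(g(r,X_r,Y_r)-g(t_{i+1},X^\pi_{t_{i+1}},y^\pi_{t_{i+1}})\big)\dbr$ and $I^f_i:=\int_{t_i}^{t_{i+1}}\!\big(f(r,X_r,Y_r,Z_r)-f(t_i,X^\pi_{t_i},y^\pi_{t_i},z^\pi_{t_i})\big)\dr$, and using that $Y_{t_i},y^\pi_{t_i}$ are $\mathcal F_{t_i}$–measurable while $\Delta W_i\perp\mathcal F_{t_i}$ (so the $\dwr$–integrals have zero conditional mean), I obtain $e_i=\mathbb E[\epsilon_i+I^f_i\mid\mathcal F_{t_i}]$.

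The decisive step is the predictor estimate. The backward stochastic integral $I^g_i$ is \emph{orthogonal} to $e_{i+1}$: decomposing its integrand as $\big(g(r,X_r,Y_r)-g(t_{i+1},X_{t_{i+1}},Y_{t_{i+1}})\big)+\big(g(t_{i+1},X_{t_{i+1}},Y_{t_{i+1}})-g(t_{i+1},X^\pi_{t_{i+1}},y^\pi_{t_{i+1}})\big)$, the first piece is a backward–It\^o integrand vanishing at $t_{i+1}$, whose integral has zero conditional mean given $\mathcal F^W_{0,T}\vee\mathcal F^B_{t_{i+1},T}\supseteq\sigma(e_{i+1})$, while the second is $\mathcal F_{t_{i+1}}$–measurable and multiplies $\Delta B_i$, which is independent of $\mathcal F_{t_{i+1}}$; hence $\mathbb E[e_{i+1}I^g_i]=0$ and
\begin{equation*}
\mathbb E|\epsilon_i|^2=\mathbb E|e_{i+1}|^2+\mathbb E|I^g_i|^2\le(1+C\Delta t_i)\,\mathbb E|e_{i+1}|^2+C(1+|x|^2)|\pi|\,\Delta t_i,
\end{equation*}
the bound on $\mathbb E|I^g_i|^2$ coming from \textbf{(H1)}, the strong Euler estimate of Lemma \ref{semi-sde}, and the path regularity of Proposition \ref{regularity}.

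For the corrector I would recover the $Z$–error from the orthogonal part of the recursion. Subtracting $e_i=\mathbb E[\epsilon_i+I^f_i\mid\mathcal F_{t_i}]$ from its unconditioned counterpart gives $\int_{t_i}^{t_{i+1}}(Z_r-z^\pi_{t_i})\dwr=(\epsilon_i-\mathbb E[\epsilon_i\mid\mathcal F_{t_i}])+(I^f_i-\mathbb E[I^f_i\mid\mathcal F_{t_i}])$, so It\^o's isometry plus the $z$–Lipschitz estimate of $\mathbb E|I^f_i|^2$ (absorbing its $\Delta t_i$–small multiple of $\mathbb E\!\int|Z_r-z^\pi_{t_i}|^2\dr$ to the left for $|\pi|$ small) yields $\mathbb E\!\int_{t_i}^{t_{i+1}}|Z_r-z^\pi_{t_i}|^2\dr\le C\big(\mathbb E|\epsilon_i|^2-\mathbb E|\mathbb E[\epsilon_i\mid\mathcal F_{t_i}]|^2\big)+C\Delta t_i^2\big[(1+|x|^2)|\pi|+\mathbb E|e_i|^2\big]$. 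Squaring $e_i=\mathbb E[\epsilon_i\mid\mathcal F_{t_i}]+\mathbb E[I^f_i\mid\mathcal F_{t_i}]$, applying Young's inequality with weight $\gamma\Delta t_i$ and inserting this bound, the conditional–variance terms combine; choosing $\gamma$ large (depending only on $K,T$) keeps the coefficient of $\mathbb E|\mathbb E[\epsilon_i\mid\mathcal F_{t_i}]|^2$ nonnegative, so it may be replaced by $\mathbb E|\epsilon_i|^2$, giving (after absorbing the $O(\Delta t_i)\mathbb E|e_i|^2$ term) $\mathbb E|e_i|^2\le(1+C\Delta t_i)\mathbb E|\epsilon_i|^2+C(1+|x|^2)|\pi|\Delta t_i$. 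Together with the predictor estimate this is $\mathbb E|e_i|^2\le(1+C\Delta t_i)\mathbb E|e_{i+1}|^2+C(1+|x|^2)|\pi|\Delta t_i$, and since $\mathbb E|e_N|^2=\mathbb E|h(X_T)-h(X^\pi_{t_N})|^2\le C(1+|x|^2)|\pi|$ by \textbf{(H2)} and Lemma \ref{semi-sde}, discrete Gronwall gives the assertion; the constant is independent of $d,k,l$ because every bound is in Frobenius norms and only $K$, $T$ and the scalar factor $e^{CT}$ enter.

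The step I expect to be the main obstacle is the orthogonality $\mathbb E[e_{i+1}I^g_i]=0$ and, more generally, the bookkeeping forced by the non–monotone filtration $\mathcal F_t=\mathcal F^W_{0,t}\vee\mathcal F^B_{t,T}$: unlike for a pure BSDE the backward term is not annihilated by $\mathbb E[\,\cdot\mid\mathcal F_{t_i}]$, so one must invoke the backward–It\^o martingale property to decouple it from $e_{i+1}$. A second delicate point is avoiding a spurious factor $1/\Delta t_i$ in the $Z$–error, which is only removed by tying that error to the conditional variance of $\epsilon_i$ and cancelling it against the loss in Young's inequality, thereby securing the sharp mean–square rate $O(|\pi|)$ rather than $O(\sqrt{|\pi|})$; the implicit corrector \eqref{semidiscrete} is well posed (a contraction for $|\pi|$ small by \textbf{(H1)}), which legitimizes the conditional–expectation representations used throughout.
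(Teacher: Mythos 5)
Your proposal is correct, and it rests on the same two pillars as the paper's proof: (i) the cross term between the error at $t_{i+1}$ and the mismatch of backward $B$-integrals vanishes (your $\mathbb{E}[e_{i+1}I^g_i]=0$ is exactly the paper's $I_5=0$, proved by the identical decomposition into a backward-martingale part plus an $\mathcal{F}_{t_{i+1}}$-measurable factor multiplying the independent increment $\Delta B_i$), and (ii) the potential $1/\Delta t_i$ loss from the driver is neutralized by trading the $Z$-error for a conditional variance that cancels against the loss in Young's inequality, which is what secures the rate $O(|\pi|)$. Where you genuinely depart from the paper is in how that variance is produced. The paper works with the nodal error $\Delta z_i = Z_{t_i}-z^{\pi}_{t_i}$: it multiplies both \eqref{bdsde} and \eqref{semidiscrete} by $\Delta W_i$, takes conditional expectations, squares (terms $J_1,\dots,J_5$), and then invokes the Cauchy--Schwarz bound $\big|\mathbb{E}_i[\Delta y_{i+1}\Delta W_i]\big|^2\leq \Delta t_i\big(\mathbb{E}_i[|\Delta y_{i+1}|^2]-|\mathbb{E}_i[\Delta y_{i+1}]|^2\big)$, so that the conditional variance of the \emph{future} error $\Delta y_{i+1}$ appears; the free parameters $\varepsilon_1,\varepsilon_3,\varepsilon_4,\varepsilon_5$ must then be tuned so that the combination $C_y^2\,\mathbb{E}|\Delta y_{i+1}|^2+(1-C_y^2)\,\mathbb{E}|\mathbb{E}_i[\Delta y_{i+1}]|^2$ collapses back to $\mathbb{E}|\Delta y_{i+1}|^2$. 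You never form a nodal $Z$-error: you identify the martingale part $\int_{t_i}^{t_{i+1}}(Z_r-z^{\pi}_{t_i})\dwr$ with the centered quantities $(\epsilon_i-\mathbb{E}[\epsilon_i\mid\mathcal{F}_{t_i}])+(I^f_i-\mathbb{E}[I^f_i\mid\mathcal{F}_{t_i}])$ and apply It\^o's isometry, so it is the conditional variance of the \emph{predictor output} $\epsilon_i$ that controls the time-integrated $Z$-error, and a single Young weight $\gamma\Delta t_i$ with $\gamma$ large (depending only on $K$) performs the cancellation. This is the classical BSDE-discretization argument (in the style of Bouchard--Touzi and Zhang) transplanted to the doubly stochastic setting; it is tidier than the paper's computation (two one-step inequalities mirroring the predictor-corrector structure, no five-parameter bookkeeping) and yields in passing an $L^2$-in-time bound on $Z_r-z^{\pi}_{t_i}$, whereas the paper's route gives a bound on the nodal value $\Delta z_i$ itself. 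Two points that you, like the paper, leave implicit and that deserve a line: the measurability of $y^{\pi}_{t_{i+1}}$, $z^{\pi}_{t_{i+1}}$ with respect to $\sigma\{\Delta W_j,\,j\leq i\}\vee\sigma\{\Delta B_j,\,j\geq i+1\}$ — needed for both orthogonality claims — should be established by backward induction on the scheme, and every absorption step requires $|\pi|$ small enough for the relevant coefficients to stay positive.
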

\begin{proof}
Let $\mathbb{E}_i[\cdot]$ be the conditional expectation with respect to the discrete filtration $\mathcal{G}^\pi_{t_i}$ defined by
\begin{equation*}
  \mathcal{G}^\pi_{t_i}:=\sigma\{\Delta W_j;0\leq j\leq i-1\}\vee \sigma\{\Delta B_j;0\leq j\leq N-1\}.
\end{equation*}
Setting $s=t_i$, taking the conditional expectation $\E[\cdot]$ in \eqref{bdsde} and using the tower property, we have
\begin{equation}\label{yi}
   \begin{split}
     Y_{t_i} & =\E\left[ h(X_T)+\int_{t_i}^T f(r,X_r,Y_r,Z_r)\dr+\int_{t_i}^T  g(r,X_r,Y_r)\dd B_r\right]\\
       &=\E\left[Y_{t_{i+1}}+\int_{t_i}^{t_{i+1}}f(r,X_r,Y_r,Z_r)\dr+\int_{t_i}^{t_{i+1}} g(r,X_r,Y_r)\dd B_r\right].
   \end{split}
\end{equation}
We now define $$\Delta {y}_i:=Y_{t_i}-{y}_{t_i}^\pi(=Y_{t_i}-\varphi_{t_i}^\pi), \quad \Delta {z}_i:=Z_{t_i}-{z}_{t_i}^\pi, \quad i=0,\cdots,N-1.$$
Then, by \eqref{yi}, \eqref{semidiscrete} and \eqref{psi}, we have
\begin{equation*}
   \begin{split}
      \Delta {y}_i & = \E[Y_{t_{i+1}}-\psi^{\pi}_{t_i}]+\E\Bigg[\int_{t_i}^{t_{i+1}}\delta {f}^i(r)\dr\Bigg]+\E\Bigg[\int_{t_i}^{t_{i+1}}g(r,X_r,Y_r)\dbr\Bigg]\\
&=\E[\Delta {y}_{i+1}]+\E\Bigg[\int_{t_i}^{t_{i+1}}\delta {f}^i(r)\dr\Bigg]+\E\Bigg[\int_{t_i}^{t_{i+1}}\delta{g}^{i+1}(r)\dbr\Bigg],
   \end{split}
\end{equation*}
where
\begin{equation*}
   \begin{split}
       & \delta {f}^i(r):={f}(r,X_r,Y_r,Z_r)-f(t_i,X_{t_{i}}^\pi,y^{\pi}_{t_{i}},z_{t_{i}}^{\pi}), \\
       & \delta {g}^{i+1}(r):={g}(r,X_r,Y_r)-g(t_{i+1},X_{t_{i+1}}^\pi,y_{t_{i+1}}^\pi).
   \end{split}
\end{equation*}
By squaring and then taking the expectation on both sides of the above equation we obtain
\begin{equation*}
  \begin{split}
&\mathbb{E}|\Delta {y}_i|^2=\mathbb{E}|\E[\Delta {y}_{i+1}]|^2+\mathbb{E}\Bigg|\int_{t_i}^{t_{i+1}}\!\!\E[\delta {f}^i(r)]\dr\Bigg|^2+\mathbb{E}\Bigg|\E\Bigg[\int_{t_i}^{t_{i+1}}\!\!\!\delta{g}^{i+1}(r)\dbr\Bigg]\Bigg|^2\\
&\qquad\qquad +2\mathbb{E}\Bigg[\E[\Delta {y}_{i+1}]\int_{t_i}^{t_{i+1}}\!\!\!\E[\delta {f}^i(r)]\dr\Bigg]+2\mathbb{E}\Bigg[\E[\Delta {y}_{i+1}]\E\Big[\int_{t_i}^{t_{i+1}}\!\!\!\delta{g}^{i+1}(r)\dbr\Big]\Bigg]\\
&\qquad\qquad+2\mathbb{E}\Bigg[\int_{t_i}^{t_{i+1}}\!\!\!\E[\delta {f}^i(r)]\dr\E\Big[\int_{t_i}^{t_{i+1}}\!\!\!\delta{g}^{i+1}(r)\dbr\Big]\Bigg]:=I_1+I_2+I_3+I_4+I_5+I_6.
\end{split}
\end{equation*}
By Cauchy's inequality, Jensen's inequality, and \textbf{(H1)}, we have
\begin{equation*}
   \begin{split}
       I_2&\leq  \Delta t_i \mathbb{E}\Big[\int_{t_i}^{t_{i+1}}\!\!\! \left( \E[\delta f^i(r)]\right)^2\dr\Big] \\
       &\leq  K\Delta t_i \int_{t_i}^{t_{i+1}} \!\!\!\left(|r-t_{i}|+\mathbb{E}|X_r -X_{t_i}^\pi|^2+\mathbb{E}|Y_r-y_{t_i}^\pi|^2+\mathbb{E}|Z_r-z_{t_{i}}^\pi|^2\right)\dr.
   \end{split}
\end{equation*}
Next, by It\^{o}'s isometry, Jensen's inequality, and \textbf{(H1)}, we obtain
\begin{equation*}
   \begin{split}
       I_3&=  \mathbb{E}\Big[\int_{t_i}^{t_{i+1}}\!\!\! \left(\E[\delta{g}^{i+1}(r)]\right)^2\dr\Big] \\
       &\leq K \int_{t_i}^{t_{i+1}}\!\!\!\Big(|r-t_{i}|+\mathbb{E}|X_r -X_{t_{i+1}}^\pi|^2+\mathbb{E}|Y_r-y_{t_{i+1}}^\pi|^2\Big)\dr.
   \end{split}
\end{equation*}
Notice that $Y_s$ is $\mathcal{F}_{s,T}^B$-measurable, $\Delta {y}_{i+1}$ is $\mathcal{F}_{t_{i+1},T}^B$-measurable, thus we have
\begin{equation*}
  \mathbb{E}\Bigg[\E[\Delta {y}_{i+1}]\E\Big[\int_{t_i}^{t_{i+1}}\!\!\!{g}(r,X_r,Y_r)-g(t_{i+1},X_{t_{i+1}},Y_{t_{i+1}})\dbr\Big]\Bigg]=0.
\end{equation*}
Consequently, by properties of the conditional expectations we obtain
\begin{equation*}
\begin{split}
  I_5 & =\mathbb{E}\left[\E[\Delta {y}_{i+1}]\E\Big[\tilde{\delta} g^{i+1}\Delta B_i\Big]\right]=0,
\end{split}
\end{equation*}
where $\tilde{\delta} g^{i+1}=g(t_{i+1},X_{t_{i+1}},Y_{t_{i+1}})-g(t_{i+1},X_{t_{i+1}}^\pi,y_{t_{i+1}}^\pi)$. We next estimate $I_4$ and $I_6$. It follows from Young's inequality, Jensen's inequality, and the estimates of $I_2$ and $I_3$ that
\begin{equation*}
\begin{aligned}
I_4+I_6
 & \leq {\epsilon_1}\Delta t_i\mathbb{E}|\Delta {y}_{i+1}|^2+\frac{1}{\epsilon_1\Delta t_i}\mathbb{E}\Bigg|\int_{t_i}^{t_{i+1}}\!\!\!\E[\delta {f}^i(r)]\dr\Bigg|^2\\
 & \quad +\frac{1}{\epsilon_2}\mathbb{E}\Bigg|\int_{t_i}^{t_{i+1}}\!\!\!\E[\delta {f}^i(r)]\dr\Bigg|^2+{\epsilon_2}\mathbb{E}\Bigg|\int_{t_i}^{t_{i+1}}\!\!\! \E[\delta{g}^{i+1}(r)]\dbr\Bigg|^2\\
        &\leq  {\epsilon_1}\Delta t_i\mathbb{E}|\Delta {y}_{i+1}|^2+{\epsilon_2}K \int_{t_i}^{t_{i+1}} \!\!\!\Big(|r-t_{i}|+\mathbb{E}|X_r -X_{t_{i+1}}^\pi|^2+\mathbb{E}|Y_r-y_{t_{i+1}}^\pi|^2\Big)\dr\\
 &\quad   +    \left(\frac{1}{\epsilon_1}+\frac{\Delta t_i}{\epsilon_2}\right)K \int_{t_i}^{t_{i+1}}\!\!\! \Big(|r-t_{i}|
\mathbb{E}|X_r -X_{t_i}^\pi|^2+\mathbb{E}|Y_r-y_{t_i}^\pi|^2+\mathbb{E}|Z_r-z_{t_{i}}^\pi|^2\Big)\dr.
\end{aligned}
\end{equation*}
By Jensen's inequality, we have
\begin{equation*}
   \begin{split}
       &\mathbb{E}|Y_r-y_{t_i}^\pi|^2\leq 2\mathbb{E}|\Delta {y}_{i}|^2+2\mathbb{E}|Y_r-Y_{t_{i}}|^2, \\
       &\mathbb{E}|Y_r-y_{t_{i+1}}^\pi|^2\leq 2\mathbb{E}|\Delta {y}_{i+1}|^2+2\mathbb{E}|Y_r-Y_{t_{i+1}}|^2,\\
       &\mathbb{E}|Z_r-z_{t_{i}}^\pi|^2\leq 2\mathbb{E}|\Delta {z}_{i}|^2+2\mathbb{E}|Z_r-Z_{t_{i}}|^2.
   \end{split}
\end{equation*}
Then, using together Proposition \ref{regularity}, Lemma \ref{semi-sde},  and \textbf{(H1)} we obtain
\begin{equation}\label{eyi}
  \begin{split}
\mathbb{E}|\Delta {y}_i|^2&\leq\mathbb{E}|\E[\Delta {y}_{i+1}]|^2+{\epsilon_1}\Delta t_i\mathbb{E}|\Delta {y}_{i+1}|^2+2K(1+{\epsilon_2})\Delta t_i\mathbb{E}|\Delta {y}_{i+1}|^2\\
  &\quad + 2K\left(\frac{1}{\epsilon_1}+\Delta t_i+\frac{\Delta t_i}{\epsilon_2}\right)(\Delta t_i) \mathbb{E}|\Delta {y}_{i}|^2\\
  &\quad + 2K\left(\frac{1}{\epsilon_1}+\Delta t_i+\frac{\Delta t_i}{\epsilon_2}\right)(\Delta t_i) \mathbb{E}|\Delta {z}_{i}|^2+C(1+|x|^2)(\Delta t_i)^2.
\end{split}
\end{equation}

Now we turn to the estimation of $\Delta {z}_i$. Setting $s=t_i$, multiplying \eqref{bdsde} by $\Delta W_i$, and taking the conditional expectation $\E[\cdot]$, we obtain by the integration by parts formula that
\begin{equation}\label{zi}
   \begin{split}
\Delta t_i  Z_{t_i}&=\E[Y_{t_{i+1}}\Delta W_i]+\E\left[\int_{t_i}^{t_{i+1}}\!\!\!{f}(r,X_r,Y_r,Z_r)\dr\Delta W_i\right]\\
      &\quad +\E\left[\int_{t_i}^{t_{i+1}}\!\!\!g(r,X_r,Y_r)\dbr\Delta W_i\right]+\int_{t_i}^{t_{i+1}}\!\!\! \E[Z_{t_i}- Z_{r}]\dr.
   \end{split}
\end{equation}
Similarly, multiplying \eqref{semidiscrete} by $\Delta W_i$, taking the conditional expectation $\E[\cdot]$, and combining with \eqref{psi}, we get
\begin{equation}\label{semidiscrete-z}
\begin{split}
    {\Delta t_i}z_{t_i}^{\pi}&= \E\left[\psi^{\pi}_{t_i}{\Delta W_i}\right]=\E\left[(\varphi_{t_{i+1}}^\pi+g(t_{i+1},X_{t_{i+1}}^\pi,\varphi_{t_{i+1}}^\pi)\Delta B_i){\Delta W_i}\right].
\end{split}
\end{equation}
Subtracting \eqref{semidiscrete-z} from \eqref{zi} gives
\begin{equation*}
   \begin{split}
\Delta t_i\Delta {z}_i
&=\E[\Delta {y}_{i+1}\Delta W_i]+\E\Bigg[\int_{t_i}^{t_{i+1}}\!\!\!{f}(r,X_r,Y_r,Z_r)\dr\Delta W_i\Bigg] \\
&\quad +\E\Bigg[\int_{t_i}^{t_{i+1}}\!\!\! \delta{g}^{i+1}(r)\dbr\Delta W_i\Bigg]+\int_{t_i}^{t_{i+1}}\!\!\! \E[Z_{t_i}- Z_{r}]\dr.
   \end{split}
\end{equation*}
We now square the above equation and take expectation to get
\begin{equation*}
   \begin{split}
     \mathbb{E}\big|\Delta t_i\Delta {z}_i\big|^2
     &= \mathbb{E}\Big|\E[\Delta {y}_{i+1}\Delta W_i]\Big|^2\\
     &\quad+\mathbb{E}\Bigg[\left\{\E\Big[\int_{t_i}^{t_{i+1}}\!\!\!{f}(r,X_r,Y_r,Z_r)\dr\Delta W_i\Big]\right.\\
     &\quad\quad\quad+\left.\E\Big[\int_{t_i}^{t_{i+1}}\!\!\! \delta{g}^{i+1}(r)\dbr\Delta W_i\Big]+\int_{t_i}^{t_{i+1}} \!\!\!\E[Z_{t_i}- Z_{r}]\dr\right\}^2\Bigg]\\
     &\quad+2\mathbb{E}\Bigg[\E[\Delta {y}_{i+1}\Delta W_i]\E\Big[\int_{t_i}^{t_{i+1}}\!\!\!{f}(r,X_r,Y_r,Z_r)\dr\Delta W_i\Big]\Bigg]\\
     &\quad+2\mathbb{E}\Bigg[\E[\Delta {y}_{i+1}\Delta W_i]\E\Big[\int_{t_i}^{t_{i+1}} \!\!\!\delta{g}^{i+1}(r)\dbr\Delta W_i\Big]\Bigg]\\
     &\quad+2\mathbb{E}\Bigg[\E[\Delta {y}_{i+1}\Delta W_i]\int_{t_i}^{t_{i+1}} \!\!\!\E[Z_{r}- Z_{t_i}]\dr\Bigg]\\
     &:=J_1+J_2+J_3+J_4+J_{5}.
   \end{split}
\end{equation*}
We now estimate term by term. By Cauchy's inequality, Jensen's inequality, It\^{o}'s isometry, \textbf{(H1)}, Proposition \ref{regularity}, Lemma \ref{semi-sde}, and the estimate of $I_3$, we have
\begin{equation*}
   \begin{split}
     J_2 & \leq 3(\Delta t_i)^2\int_{t_i}^{t_{i+1}}\!\!\!\mathbb{E}|{f}(r,X_r,Y_r,Z_r)|^2\dr\\
       &\quad +3\Delta t_i\int_{t_i}^{t_{i+1}}\!\!\!\mathbb{E}|\delta{g}^{i+1}(r)|^2\dr+3\Delta t_i\int_{t_i}^{t_{i+1}}\!\!\!\mathbb{E}|Z_{t_i}- Z_{r}|^2\dr\\
       & \leq 6K(\Delta t_i)^2\mathbb{E}|\Delta {y}_{i+1}|^2+C(1+|x|^2)(\Delta t_i)^3.
   \end{split}
\end{equation*}
Combining the above result with Young's inequality, we have
\begin{equation*}
   \begin{split}
     J_3 & \leq{\varepsilon_3}\mathbb{E}\Big|\E[\Delta {y}_{i+1}\Delta W_i]\Big|^2+C(1+|x|^2)(\Delta t_i)^3,\\
     J_4 & \leq {\varepsilon_4}\mathbb{E}\Big|\E[\Delta {y}_{i+1}\Delta W_i]\Big|^2+\frac{1}{\varepsilon_4}\Big[2K(\Delta t_i)^2\mathbb{E}|\Delta {y}_{i+1}|^2+C(1+|x|^2)(\Delta t_i)^3\Big],\\
     J_5 & \leq {\varepsilon_5}\mathbb{E}\Big|\E[\Delta {y}_{i+1}\Delta W_i]\Big|^2+ C(1+|x|^2)(\Delta t_i)^3.
   \end{split}
\end{equation*}
Thus we have
\begin{equation}\label{1}
   \begin{split}
      (\Delta t_i)^2\mathbb{E}|\Delta {z}_i|^2 & \leq (1+{\varepsilon})\mathbb{E}\Big|\E[\Delta {y}_{i+1}\Delta W_i]\Big|^2 +C_z(\Delta t_i)^2\mathbb{E}|\Delta {y}_{i+1}|^2\\
       &\quad +C(1+|x|^2)(\Delta t_i)^3,
   \end{split}
\end{equation}
where ${\varepsilon}:={\varepsilon_3}+{\varepsilon_4}+{\varepsilon_5}$, and $C_z=6K+\frac{2K}{\varepsilon_4}$. Now by Cauchy's inequality we have
\begin{equation}\label{2}
  \Big|\E[\Delta {y}_{i+1}\Delta W_i]\Big|^2\leq \Delta t_i\Big(\E[|\Delta {y}_{i+1}|^2]-|\E[\Delta {y}_{i+1}]|^2\Big).
\end{equation}
Then plugging \eqref{2} and \eqref{1} into \eqref{eyi}, we obtain
\begin{equation}
\begin{split}
C_y^1\mathbb{E}|\Delta {y}_i|^2
&\leq C_y^2 \mathbb{E}|\Delta {y}_{i+1}|^2+({1-C_y^2})\mathbb{E}\Big|\E[\Delta y_{i+1}]\Big|^2\\
&\quad +C_y^3 \Delta t_i\mathbb{E}|\Delta {y}_{i+1}|^2+C\Big(1+|x|^2\Big)(\Delta t_i)^2,
\end{split}
\end{equation}
where $$C_y^1=1-2K\left(\frac{1}{\epsilon_1}+\Delta t_i+\frac{\Delta t_i}{\epsilon_2}\right)(\Delta t_i),\quad C_y^2={2K\frac{1+\varepsilon}{\varepsilon_1}},$$  $$C_y^3=\varepsilon_1+2K(1+\varepsilon_2)+2K\left(1+\frac{1}{\varepsilon_2}\right)(1+\varepsilon)+2K\left(\frac{1}{\epsilon_1}+\Delta t_i+\frac{\Delta t_i}{\epsilon_2}\right)C_z.$$
With appropriately chosen $\varepsilon_1$, $\varepsilon_3$, $\varepsilon_4$, and $\varepsilon_5$ we may set $C_y^2=1.$ Thus we obtain
\begin{align}\label{4}
\mathbb{E}|\Delta {y}_i|^2&\leq (C_y^1)^{-1} \mathbb{E}\Big|\Delta {y}_{i+1}\Big|^2+(C_y^1)^{-1}C_y^3 \Delta t_i\mathbb{E}\Big|\Delta {y}_{i+1}\Big|^2+C\Big(1+|x|^2\Big)(\Delta t_i)^2 \nonumber\\
&\leq \Big(1+C \Delta t_i\Big)\mathbb{E}\Big|\Delta {y}_{i+1}\Big|^2+C\Big(1+|x|^2\Big)(\Delta t_i)^2.
\end{align}
Then, by applying the discrete Gronwall inequality and Lemma \ref{semi-sde} to \eqref{4}, we have
\begin{equation*}
  \max\limits_{0\leq i\leq N}\mathbb{E}|\Delta {y}_i|^2\leq C(1+|x|^2)|\pi|.
\end{equation*}
This completes the proof.
\end{proof}
We now present the following theorem that shows the convergence of the time-discretization scheme.
\begin{thm}\label{semibound}
Let \textbf{(H1)}-\textbf{(H3)} hold. Then there exists a constant $C$, independent of $\pi$, $k$, $l$, and the space dimension $d$,  such that
   \begin{equation*}%\label{}
   \max\limits_{0\leq i\leq N-1}\mathbb{E}\Big|u(t_i,X_{t_i}^{t,x})-u_i\Big|_{\omega}^2\leq C|\pi|.
   \end{equation*}
\end{thm}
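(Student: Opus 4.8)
The plan is to derive Theorem \ref{semibound} from the pointwise-in-space estimate of Proposition \ref{th1} by passing to the weighted norm $|\cdot|_\omega$. Proposition \ref{th1} already controls the error at a fixed starting point up to the growth factor $(1+|x|^2)$, and the defining property $\int_{\mathbb{R}^d}(1+|x|^2)\omega(x)\dd x<\infty$ of the weight is precisely what is needed to integrate this factor out while keeping the constant finite and, with a dimension-uniform choice of $\omega$, independent of $d$.

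First I would use the probabilistic representation to match both sides with the objects analyzed in Proposition \ref{th1}. Writing the $\omega$-norm as integration over the spatial starting point, Proposition \ref{pr} identifies $u(t_i,x)=Y^{t_i,x}_{t_i}$, while the construction \eqref{sequence}--\eqref{discretization} together with the representation \eqref{bsde-rep} gives $u_i=\varphi^{\pi}_{t_i}=y^{\pi}_{t_i}$, so the integrand is exactly $|Y^{t_i,x}_{t_i}-y^{\pi}_{t_i}|^2$. I would then use Tonelli's theorem to interchange expectation and spatial integration,
\begin{equation*}
\mathbb{E}\big|u(t_i,X^{t,x}_{t_i})-u_i\big|^2_\omega=\int_{\mathbb{R}^d}\mathbb{E}\big|Y^{t_i,x}_{t_i}-y^{\pi}_{t_i}\big|^2\,\omega(x)\dd x,
\end{equation*}
insert the pointwise bound $\mathbb{E}|Y^{t_i,x}_{t_i}-y^{\pi}_{t_i}|^2\leq C(1+|x|^2)|\pi|$ of Proposition \ref{th1}, and factor out $|\pi|$ to obtain
\begin{equation*}
\mathbb{E}\big|u(t_i,X^{t,x}_{t_i})-u_i\big|^2_\omega\leq C|\pi|\int_{\mathbb{R}^d}(1+|x|^2)\omega(x)\dd x\leq C'|\pi|,
\end{equation*}
where $C'<\infty$ by the assumption on $\omega$. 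Since the right-hand side is independent of $i$, taking the maximum over $0\leq i\leq N-1$ yields the claim, and the constant inherits the $(\pi,k,l,d)$-independence from Proposition \ref{th1}.

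I expect the genuinely delicate point to be the reconciliation of evaluating $u$ along the random trajectory $X^{t,x}_{t_i}$ with the fixed-starting-point estimate of Proposition \ref{th1}: this requires the flow (Markov) property of the BDSDE, together with Proposition \ref{pr}, to identify $u(t_i,X^{t,x}_{t_i})$ with $Y^{t_i,\cdot}_{t_i}$ at the current position, and, if one carries the random initial point through, a standard second-moment bound $\mathbb{E}(1+|X^{t,x}_{t_i}|^2)<\infty$ for the diffusion (as in Lemma \ref{semi-sde}). The step where the structure of the weighted space is essential is the absorption of the $(1+|x|^2)$ growth by $\omega$; this is what converts the unbounded-domain pointwise bound into a dimension-free weighted estimate and is the reason the space $L^2_\omega$ was introduced. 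Everything else is a routine application of Fubini/Tonelli and the finiteness of the weighted second moment.
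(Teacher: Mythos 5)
Your proposal is correct and follows essentially the same route as the paper's own proof: identify $u(t_i,\cdot)$ and $u_i$ with $Y^{t_i,\cdot}_{t_i}$ and $y^{\pi}_{t_i}$ via Proposition \ref{pr} and the construction \eqref{discretization}, exchange expectation with the spatial integral, insert the pointwise bound of Proposition \ref{th1}, and absorb the $(1+|x|^2)$ growth using the integrability of the weight $\omega$. The ``delicate point'' you flag (evaluating along the random trajectory via the flow property) is handled in the paper by exactly the identification you describe, so nothing further is needed.
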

\begin{proof}
It follows from Proposition \ref{th1} that
\begin{equation*}
   \begin{split}
   \mathbb{E}\left[\int_{\mathbb{R}^d}\Big|u(t_i,X_{t_i}^{t,x})-u_i(x)\Big|^2\omega(x)\dd x\right]&=\mathbb{E}\left[\int_{\mathbb{R}^d}\Big|Y_{t_i}^{t_i,X_{t_i}^{t,x}}-y_i^\pi\Big|^2\omega(x)\dd x\right]\\
      &\leq \int_{\mathbb{R}^d}\Bigg(\max\limits_{0\leq i\leq N-1}\mathbb{E}\left|Y_{t_i}^{t_i,X_{t_i}^{t,x}}-y_i^\pi\right|^2\Bigg)\omega(x)\dd x\\
      &\leq C|\pi|\int_{\mathbb{R}^d}\Big(1+|x|^2\Big)\omega(x)\dd x\leq C|\pi|.
   \end{split}
\end{equation*}
The proof is completed.
\end{proof}

\subsection{DNNs-approximations in the physical domain}
In this section, we propose a deep learning-based numerical scheme for the discrete time approximations \eqref{semilinear} and \eqref{discretization}. To this end, we begin with a general review of how deep neural networks (DNNs) approximate unknown functions. Consider a feedforward neural network $\mathcal{N}_{L+1}$ with $L+1$ layers ($L>1$). The input layer will be referred to as layer 0 and the output layer as layer $L$. The layers in between are called \emph{hidden layers}. The basic building block of a neural network is an \emph{artificial neuron} or \emph{node}. Each input $i_k$ has an associated weight $w_k$. For the sake of simplicity, the bias terms are not treated specially, as they correspond to a weight with a fixed input of 1.  The sum of all weighted inputs, $S=\sum_k i_kw_k$, is passed through a nonlinear activation function $\sigma$, to transform the preactivation level of the neuron to an output $O=\sigma(\sum_k i_kw_k)$. The output $O$ is used as an input by nodes in the next layer. Let $n_l$ be the number of neurons on the $l$-th layer. For simplicity, we set $n_l=\bar{n}$, for $l=1,\cdots,L-1$. Thus a feedforward neural network is a function defined as the composition
\begin{equation*}
  \mathcal{N}_{L+1}^{n_0,\bar{n},n_L}(x_0,\mathcal{W},\sigma_0,\cdots,\sigma_L):=\sigma_L\circ S_L\circ\sigma_{L-1}\circ S_{L-1}\circ\cdots\circ\sigma_0\circ S_0(x_0),
\end{equation*}
where $x_0\in\mathbb{R}^{n_0}$ is the network input, $S_l$ and $\sigma_l$ are the weighted sum and the activation function of the $l$-th  layer, respectively. $\mathcal{W}$ contains all the parameters (weights and biases) of the network.  The dimension of
$\mathcal{W}$ is $$n_{\mathcal{W}}=\sum_{l=1}^L n_l\times n_{l-1}+n_{l-1}=(n_0+1)\bar{n}+(\bar{n}+1)\bar{n}(L-1)+(\bar{n}+1)n_L.$$

Recall the Euler method \eqref{exi}. For every fixed path $\{\Delta B_i,~0\leq i\leq N-1\}$ of the Brownian motion $B$, we approximate $\psi_{t_i}^\pi$ in \eqref{psi}  by
\begin{equation*}
  \psi_{t_i}^\pi\approx \Psi^*_{i}(X^{\pi}_{t_{i+1}},\Delta B_i),
\end{equation*}
with
\begin{equation}\label{psischeme}
  \Psi^*_{i}(X^{\pi}_{t_{i+1}},\Delta B_i)=\mathcal{U}^*_{i+1}(X^{\pi}_{t_{i+1}})+g\Big(t_{i+1},X_{t_{i+1}}^\pi,\mathcal{U}^*_{i+1}(X^{\pi}_{t_{i+1}})\Big)\Delta B_i,
\end{equation}
where the notation $\mathcal{U}^*_{i+1}(X^{\pi}_{t_{i+1}})$ will be explained later. For $i=N-1,\cdots,0$, we approximate $y_{t_i}^\pi$ and $z_{t_i}^\pi$ in \eqref{semidiscrete} by a pair of neural networks $\mathcal{U}_i(\cdot;\theta_{i})\in \mathcal{N}_{L+1}^{d,\bar{n},1}$ and $\mathcal{V}_i(\cdot;\theta_{i})\in \mathcal{N}_{L+1}^{d,\bar{n},d}$ with parameter $\theta_{i}$. Then, the optimal parameters $\theta^{*}_{i}$ are determined by minimizing the following loss function
 \begin{equation*}
 \begin{split}
L_{i}(\theta_{i})&=\mathbb{E}\Big|\Psi^*_{i}(X^{\pi}_{t_{i+1}},\Delta B_i)-\mathcal{V}_{i}(X^{\pi}_{t_{i}};\theta_i)\Delta W_i\\
&\quad +f\Big(t_{i},X^{\pi}_{t_i},\mathcal{U}_{i}(X^{\pi}_{t_i};\theta_i),\mathcal{V}_{i}(X^{\pi}_{t_{i}};\theta_i)\Big)\Delta t_i-\mathcal{U}_{i}(X^{\pi}_{t_i};\theta_{i})\Big|^2.
 \end{split}
\end{equation*}
We set $$\mathcal{U}^*_{i}(X^{\pi}_{t_i})=\mathcal{U}_{i}(X^{\pi}_{t_i};\theta^{*}_{i}), \quad \mathcal{V}^*_{i}(X^{\pi}_{t_i})=\mathcal{V}_{i}(X^{\pi}_{t_i};\theta^{*}_{i}).$$ It follows from \eqref{semilinear} that the value of the solution $u$ of \eqref{spde} at time $t_i$ is approximated by $\mathcal{U}^*_{i}(X^{\pi}_{t_i})$. We remark $\mathcal{V}^*_{i}(X^{\pi}_{t_i})$ is not an approximation of $(\nabla u\sigma)$ at time $t_i$. In comparison to the global optimization problem formulated in \cite{teng2021solving}, which requires to keep in memory all the computed network approximations of $u$, the memory requirements for our proposed algorithm are significantly reduced. We finally present the whole computational procedure in Algorithm 1.

\begin{algorithm}
\DontPrintSemicolon
\caption{}
  \KwData{Sample paths of $\{X^{\pi}_{t_i}\}_{i=0}^N$,~ $\{\Delta W_i\}_{i=0}^{N-1}$~ and~ $\{\Delta B_i\}_{i=0}^{N-1}$}
%  \KwInput{Input}
%  \KwOutput{Output}
  $\mathcal{U}^*_{N}= h(X^{\pi}_{t_N})$.%\tcp*{this is a comment}
  %\tcc{Now this is an if...else conditional loop}

  \For{$i=N-1,\cdots,0$}
  {
Set $\Psi^*_{i}(X^{\pi}_{t_{i+1}},\Delta B_i)=\mathcal{U}^*_{i+1}(X^{\pi}_{t_{i+1}})+g(t_{i+1},X_{t_{i+1}}^\pi,\mathcal{U}^*_{i+1}(X^{\pi}_{t_{i+1}}))\Delta B_i$

Train a pair of deep neural networks $\mathcal{U}_i(X^{\pi}_{t_i};\theta_{i})$ and $\mathcal{V}_i(X^{\pi}_{t_i};\theta_{i})$ for the approximation of $(y_{t_i}^\pi,z_{t_i}^\pi )$ by minimizing $L_{i}(\theta_{i})$. Set
\begin{equation*}
\theta^{*}_{i}=\arg\min L_{i}(\theta_{i}).
\end{equation*}

Update $\mathcal{U}^*_{i}(X^{\pi}_{t_i})=\mathcal{U}_{i}(X^{\pi}_{t_i};\theta^{*}_{i})$ and $\mathcal{V}^*_{i}(X^{\pi}_{t_i})=\mathcal{V}_{i}(X^{\pi}_{t_i};\theta^{*}_{i})$.

$u(0,X_{0})=Y_{0}^{0,X_{0}}\approx \mathcal{U}^*_{0}(X^{\pi}_{t_0})$.
  }
\end{algorithm}
We now define
\begin{equation}\label{discrete}
\begin{split}
    \mathcal{U}_{t_i}^{\pi}&=\Psi^*_{i}(X^{\pi}_{t_{i+1}},\Delta B_i)+f\Big(t_i,X_{t_{i}}^\pi,\mathcal{U}^{\pi}_{t_{i}},\mathcal{V}_{t_{i}}^{\pi}\Big)\Delta t_i-\mathcal{V}_{t_i}^{\pi}\Delta W_i,
\end{split}
\end{equation}
Then the following theorem presents the convergence result for the algorithm.
\begin{thm}\label{thm31}
Assume that \textbf{(H1)}-\textbf{(H3)} hold. We also suppose that the partition $\pi$ satisfies the regularity constraint $\frac{|\pi|}{\min\limits_{i}\Delta t_i}\leq c_0.$ Then there exists a constant $C$, independent of $\pi$, $k$, $l$, and the space dimension $d$, such that
\begin{equation*}
   \begin{split}
       & \quad \max\limits_{0\leq i\leq N-1}\mathbb{E}\Big|u(t_i,X_{t_i}^{t,x})-\mathcal{U}^*_{i}(X^{\pi}_{t_i})\Big|_\omega^2\\
&\leq C\left(|\pi|+\frac{c_0}{|\pi|}\sum_{i=0}^{N-1} \Bigg(\inf_{\theta_{i}}\mathbb{E}\Big|\mathcal{U}^{\pi}_{t_i}-\mathcal{U}_{i}(X^{\pi}_{t_i};\theta_{i})\Big|^2\Bigg)+\sum_{i=0}^{N-1} \Bigg(\inf_{\theta_{i}}\mathbb{E}\Big|\mathcal{V}^{\pi}_{t_i}-\mathcal{V}_{i}(X^{\pi}_{t_i};\theta_{i})\Big|^2\Bigg)\right).
   \end{split}
\end{equation*}
\end{thm}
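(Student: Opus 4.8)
The plan is to superpose the temporal error of Theorem~\ref{semibound} onto an estimate of the discrepancy between the semi-discrete scheme $u_i=\varphi^\pi_{t_i}=y^\pi_{t_i}$ and its network realization $\mathcal{U}^*_i$, and to propagate the latter by a discrete Gronwall argument whose multiplicative constant stays bounded as $N\to\infty$. First I would split, for each starting point and after integrating against $\omega$,
\[
\mathbb{E}\big|u(t_i,X_{t_i}^{t,x})-\mathcal{U}^*_i(X^\pi_{t_i})\big|_\omega^2
\leq 2\,\mathbb{E}\big|u(t_i,X_{t_i}^{t,x})-u_i\big|_\omega^2
+2\,\mathbb{E}\big|y^\pi_{t_i}-\mathcal{U}^*_i(X^\pi_{t_i})\big|_\omega^2 ,
\]
the first term being at most $C|\pi|$ by Theorem~\ref{semibound}. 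This triangle inequality is invoked only once, so its factor $2$ is harmless; the entire difficulty is to bound the second term \emph{without} paying such a factor at every time level.

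Writing $\delta_i:=\mathbb{E}|y^\pi_{t_i}-\mathcal{U}^*_i|^2$, I would introduce the intermediate pair $(\mathcal{U}^\pi_{t_i},\mathcal{V}^\pi_{t_i})$ of \eqref{discrete}, interpreted as in \eqref{semidiscrete} as the $\mathcal{G}^\pi_{t_i}$-measurable solution $\mathcal{V}^\pi_{t_i}=\frac{1}{\Delta t_i}\E[\Psi^*_i\Delta W_i]$, $\mathcal{U}^\pi_{t_i}=\E[\Psi^*_i]+f(t_i,X^\pi_{t_i},\mathcal{U}^\pi_{t_i},\mathcal{V}^\pi_{t_i})\Delta t_i$. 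With $a=y^\pi_{t_i}$, $b=\mathcal{U}^\pi_{t_i}$, $c=\mathcal{U}^*_i$ and a parameter $\gamma>0$, the weighted bound $|a-c|^2\leq(1+\gamma\Delta t_i)|a-b|^2+(1+\tfrac{1}{\gamma\Delta t_i})|b-c|^2$ separates a propagation term from a one-step approximation term. The first is a genuine \emph{stability} estimate: $y^\pi$ and $\mathcal{U}^\pi$ solve the same one-step scheme and differ only in the data supplied at level $i+1$, namely $y^\pi_{t_{i+1}}$ against $\mathcal{U}^*_{i+1}$. Repeating the squaring--conditioning--Young computation of Proposition~\ref{th1} (eliminating the $z$-increment through the analogues of \eqref{1}--\eqref{2}) then yields the \emph{clean} bound $\mathbb{E}|y^\pi_{t_i}-\mathcal{U}^\pi_{t_i}|^2\leq(1+C\Delta t_i)\delta_{i+1}$, with no additive $(\Delta t_i)^2$ remainder, since both objects are exact conditional-expectation schemes and no regularity differences enter the relevant $f$-increment.

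The one-step term is controlled by the loss. Substituting the definition of $\Psi^*_i$ into $L_i(\theta_i)$ and using that $\mathcal{U}^\pi_{t_i},\mathcal{V}^\pi_{t_i}$ and the networks $\mathcal{U}_i(\cdot;\theta_i),\mathcal{V}_i(\cdot;\theta_i)$ are all $\mathcal{G}^\pi_{t_i}$-measurable while $\Delta W_i$ is independent of $\mathcal{G}^\pi_{t_i}$ with mean zero, the cross term in the expansion of the square vanishes and
\[
L_i(\theta_i)=\mathbb{E}\big|(\mathcal{U}^\pi_{t_i}-\mathcal{U}_i)+\delta f_i\,\Delta t_i\big|^2
+\Delta t_i\,\mathbb{E}\big|\mathcal{V}^\pi_{t_i}-\mathcal{V}_i\big|^2 ,
\]
where $|\delta f_i|^2\leq K(|\mathcal{U}^\pi_{t_i}-\mathcal{U}_i|^2+|\mathcal{V}^\pi_{t_i}-\mathcal{V}_i|^2)$ by \textbf{(H1)}. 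Combining $|p+q|^2\geq\frac12|p|^2-|q|^2$ with this Lipschitz bound and $\Delta t_i$ small sandwiches $L_i$ between constant multiples of $\mathbb{E}|\mathcal{U}^\pi_{t_i}-\mathcal{U}_i|^2+\Delta t_i\mathbb{E}|\mathcal{V}^\pi_{t_i}-\mathcal{V}_i|^2$. Since $\theta^*_i$ minimizes $L_i$, the lower bound at $\theta^*_i$ and the upper bound at the best-approximating parameters give $\mathbb{E}|\mathcal{U}^\pi_{t_i}-\mathcal{U}^*_i|^2\leq C\inf_{\theta_i}L_i(\theta_i)\leq C\inf_{\theta_i}\mathbb{E}|\mathcal{U}^\pi_{t_i}-\mathcal{U}_i|^2+C\Delta t_i\inf_{\theta_i}\mathbb{E}|\mathcal{V}^\pi_{t_i}-\mathcal{V}_i|^2$ (the two infima separating by optimizing the $\mathcal{U}$- and $\mathcal{V}$-networks independently).

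Assembling the pieces with $\gamma$ fixed produces the recursion $\delta_i\leq(1+C\Delta t_i)\delta_{i+1}+\frac{C}{\Delta t_i}\inf_{\theta_i}\mathbb{E}|\mathcal{U}^\pi_{t_i}-\mathcal{U}_i|^2+C\inf_{\theta_i}\mathbb{E}|\mathcal{V}^\pi_{t_i}-\mathcal{V}_i|^2$, in which the $\tfrac{1}{\Delta t_i}$ from the weighted split lands only on the additive, non-propagated $\mathcal{U}$-error, while the $\Delta t_i$ carried by the $\mathcal{V}$-term in the loss cancels it there. Because $\delta_N=\mathbb{E}|h(X^\pi_{t_N})-h(X^\pi_{t_N})|^2=0$, the discrete Gronwall inequality with $\prod_i(1+C\Delta t_i)\leq e^{CT}$ and the regularity constraint $\tfrac{1}{\Delta t_i}\leq\tfrac{c_0}{|\pi|}$ turn this into exactly the two summed network-error contributions of the claim; integrating against $\omega$ as in the proof of Theorem~\ref{semibound} (the $1+|x|^2$ weights yielding finite constants) and adding the $C|\pi|$ from the first split completes the estimate. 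I expect the crux to be the clean $(1+C\Delta t_i)$ propagation factor together with the correct $\tfrac{1}{\Delta t_i}$ bookkeeping: a naive triangle inequality would leave a constant exceeding $1$ in front of $\delta_{i+1}$, compounding to $2^{T/|\pi|}$ under Gronwall, so the weighted splitting, the vanishing of the $\Delta W_i$ cross terms by $\mathcal{G}^\pi_{t_i}$-measurability, and the precise cancellation by which the regularity constraint absorbs $\tfrac{1}{\Delta t_i}$ are the delicate points.
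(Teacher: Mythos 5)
Your proposal is correct, and it rests on the same core machinery as the paper's proof: the intermediate pair $(\mathcal{U}^{\pi}_{t_i},\mathcal{V}^{\pi}_{t_i})$ of \eqref{discrete}, the two-sided sandwich of the loss $L_i(\theta_i)$ in which the $\Delta W_i$ cross terms vanish by independence, the weighted Young split that places the factor $1/\Delta t_i$ only on the non-propagated $\mathcal{U}$-error while the $\Delta t_i$ carried by the $\mathcal{V}$-term in the loss absorbs it, the discrete Gronwall inequality, and the regularity constraint $1/\Delta t_i\leq c_0/|\pi|$. Where you differ is the error decomposition. The paper runs its Gronwall recursion directly on $\mathbb{E}\big|Y_{t_i}-\mathcal{U}^*_{i}(X^{\pi}_{t_i})\big|^2$, comparing the exact BDSDE solution to the network chain in one pass; it therefore repeats the computation of Proposition \ref{th1}, so the time-regularity remainders $C(1+|x|^2)(\Delta t_i)^2$ and the nonzero terminal error are carried inside the recursion and sum to $C(1+|x|^2)|\pi|$, with the $\omega$-integration performed at the end. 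You instead split once, $u-\mathcal{U}^*_i=(u-u_i)+(y^{\pi}_{t_i}-\mathcal{U}^*_i)$, invoke Theorem \ref{semibound} for the first piece, and run the recursion on $\delta_i=\mathbb{E}|y^{\pi}_{t_i}-\mathcal{U}^*_i|^2$, whose terminal value is exactly zero and whose propagation step is a pure Lipschitz stability estimate between two discrete schemes of identical structure; as you correctly observe, no $(\Delta t_i)^2$ remainder and no appeal to Proposition \ref{regularity} or Lemma \ref{semi-sde} then enters the recursion. This buys a cleaner separation of discretization error from network-approximation error, at the harmless cost of one overall factor of $2$; your explicit conditional-expectation definition of $(\mathcal{U}^{\pi}_{t_i},\mathcal{V}^{\pi}_{t_i})$ and your remark that the two infima decouple because the $\mathcal{U}$- and $\mathcal{V}$-networks carry independent parameters are also more careful than the paper's wording. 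One small inconsistency you share with the paper: once $(\mathcal{U}^{\pi}_{t_i},\mathcal{V}^{\pi}_{t_i})$ is defined through conditional expectations, \eqref{discrete} holds only up to a mean-zero residual $R_i$ that is orthogonal both to $\mathcal{G}^{\pi}_{t_i}$-measurable variables and to $\mathcal{G}^{\pi}_{t_i}$-measurable multiples of $\Delta W_i$, so the loss expansion should carry an additional $\theta_i$-independent term $\mathbb{E}|R_i|^2$; since this term appears identically on both sides of the sandwich, it cancels when comparing $L_i(\theta_i^*)$ with $L_i(\theta_i)$, and neither your conclusion nor the paper's is affected.
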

\begin{proof}
From \eqref{yi}, \eqref{psischeme} and \eqref{discrete}, we have
\begin{equation*}%\label{}
   \begin{split}
Y_{t_i}-\mathcal{U}_{t_i}^{\pi} &= \E\Big[Y_{t_{i+1}}-\Psi^*_{i}(X^{\pi}_{t_{i+1}},\Delta B_i)\Big]+\E\left[\int_{t_i}^{t_{i+1}} \!\!\!g(r,X_r,Y_r)\dd B_r\right]\\
&\quad  +\E\left[\int_{t_i}^{t_{i+1}}\!\!\!f\Big(r,X_r,Y_r,Z_r\Big)\dr-f\Big(t_i,X_{t_i}^\pi,\mathcal{U}_{t_i}^{\pi},\mathcal{V}_{t_i}^{\pi}\Big)\Delta t_i\right]\\
&=\E\Big[Y_{t_{i+1}}-\mathcal{U}^*_{i+1}(X^{\pi}_{t_{i+1}})\Big]+\E\left[\int_{t_i}^{t_{i+1}}\!\!\! g(r,X_r,Y_r)\dd B_r-g\Big(t_{i+1},X_{t_{i+1}}^\pi,\mathcal{U}^*_{i+1}(X^{\pi}_{t_{i+1}})\Big)\Delta B_i\right]\\
       & \quad  +\E\left[\int_{t_i}^{t_{i+1}}\!\!\!f\Big(r,X_r,Y_r,Z_r\Big)\dr-f\Big(t_i,X_{t_i}^\pi,\mathcal{U}_{t_i}^{\pi},\mathcal{V}_{t_i}^{\pi}\Big)\Delta t_i\right].
   \end{split}
\end{equation*}
Using similar arguments as for Proposition \ref{th1} we have
\begin{equation*}
\mathbb{E}\Big|Y_{t_i}-\mathcal{U}^{\pi}_{t_i}\Big|^2 \leq(1+C \Delta t_i)\mathbb{E}\left|Y_{t_{i+1}}-\mathcal{U}^*_{i+1}(X^{\pi}_{t_{i+1}})\right|^2+C(1+|x|^2)(\Delta t_i)^2.
\end{equation*}
It follows from the Young's inequality that
\begin{equation*}
   \begin{split}
     \mathbb{E}\left|Y_{t_i}-\mathcal{U}^*_{i}(X^{\pi}_{t_i})\right|^2 & =\mathbb{E}\left|Y_{t_i}-\mathcal{U}^{\pi}_{t_i}+\mathcal{U}^{\pi}_{t_i}-\mathcal{U}^*_{i}(X^{\pi}_{t_i})\right|^2 \\
       & \leq (1+\Delta t_i)\mathbb{E}\left|Y_{t_i}-\mathcal{U}^{\pi}_{t_i}\right|^2+\left(1+\frac{1}{\Delta t_i}\right)\mathbb{E}\left|\mathcal{U}^{\pi}_{t_i}-\mathcal{U}^*_{i}(X^{\pi}_{t_i})\right|^2\\
&\leq(1+C \Delta t_i)\mathbb{E}\left|Y_{t_{i+1}}-\mathcal{U}^*_{i+1}(X^{\pi}_{t_{i+1}})\right|^2\\
&\quad +C(1+|x|^2)(\Delta t_i)^2+\left(1+\frac{1}{\Delta t_i}\right)\mathbb{E}\left|\mathcal{U}^{\pi}_{t_i}-\mathcal{U}^*_{i}(X^{\pi}_{t_i})\right|^2.
   \end{split}
\end{equation*}
Then by discrete Gronwall's inequality we obtain
\begin{equation}\label{gronwall}
\max\limits_{0\leq i\leq N-1}\mathbb{E}\left|Y_{t_i}-\mathcal{U}^*_{i}(X^{\pi}_{t_i})\right|^2\leq  C\sum_{i=0}^{N-1}\left(1+\frac{1}{\Delta t_i}\right)\mathbb{E}\left|\mathcal{U}^{\pi}_{t_i}-\mathcal{U}^*_{i}(X^{\pi}_{t_i})\right|^2+C\left(1+|x|^2\right)|\pi|.
\end{equation}
On the other hand by \eqref{discrete} we have
 \begin{equation*}
 \begin{split}
 L_{i}(\theta_{i})&=\mathbb{E}\left|\Psi^*_{i}(X^{\pi}_{t_{i+1}},\Delta B_i)-\mathcal{V}_{i}(X^{\pi}_{t_{i}};\theta_i)\Delta W_i+f\!\left(t_{i},X^{\pi}_{t_i},\mathcal{U}_{i}(X^{\pi}_{t_i};\theta_i),\mathcal{V}_{i}(X^{\pi}_{t_{i}};\theta_i)\right)\Delta t_i-\mathcal{U}_{i}(X^{\pi}_{t_i};\theta_{i})\right|^2\\
&=\mathbb{E}\Big|\mathcal{U}_{t_i}^{\pi}-f\!\left(t_i,X_{t_{i}}^\pi,\mathcal{U}^{\pi}_{t_{i}},\mathcal{V}_{t_{i}}^{\pi}\right)\Delta t_i+\mathcal{V}_{t_i}^{\pi}\Delta W_i-\mathcal{V}_{i}(X^{\pi}_{t_{i}};\theta_i)\Delta W_i  \\
&\quad \quad +f\!\left(t_{i},X^{\pi}_{t_i},\mathcal{U}_{i}(X^{\pi}_{t_i};\theta_i),\mathcal{V}_{i}(X^{\pi}_{t_{i}};\theta_i)\right)\Delta t_i- \mathcal{U}_{i}(X^{\pi}_{t_i};\theta_{i})\Big|^2\\
&=\mathbb{E}\Big|\mathcal{U}_{t_i}^{\pi}-\mathcal{U}_{i}(X^{\pi}_{t_i};\theta_{i})\Big|^2+\Delta t_i\mathbb{E}\Big|\mathcal{V}_{t_i}^{\pi}-\mathcal{V}_{i}(X^{\pi}_{t_{i}};\theta_i)\Big|^2\\
&\quad\quad  +(\Delta t_i)^2\mathbb{E}\Big|\Big(f(t_{i},X^{\pi}_{t_i},\mathcal{U}_{i}(X^{\pi}_{t_i};\theta_i),\mathcal{V}_{i}(X^{\pi}_{t_{i}};\theta_i)\Big)
-f(t_i,X_{t_{i}}^\pi,\mathcal{U}^{\pi}_{t_{i}},\mathcal{V}_{t_{i}}^{\pi})\Big|^2\\
&\quad\quad  +2\Delta t_i\mathbb{E}\Big[\Big(\mathcal{U}_{t_i}^{\pi}-\mathcal{U}_{i}(X^{\pi}_{t_i};\theta_{i})\Big)\Big(f(t_{i},X^{\pi}_{t_i},\mathcal{U}_{i}(X^{\pi}_{t_i};\theta_i),\mathcal{V}_{i}(X^{\pi}_{t_{i}};\theta_i)\Big)
-f(t_i,X_{t_{i}}^\pi,\mathcal{U}^{\pi}_{t_{i}},\mathcal{V}_{t_{i}}^{\pi})\Big)\Big]\\
&\leq (1+\Delta t_i)\mathbb{E}\Big|\mathcal{U}_{t_i}^{\pi}-\mathcal{U}_{i}(X^{\pi}_{t_i};\theta_{i})\Big|^2+\Delta t_i\mathbb{E}\Big|\mathcal{V}_{t_i}^{\pi}-\mathcal{V}_{i}(X^{\pi}_{t_{i}};\theta_i)\Big|^2\\
& \quad \quad +(1+1/\Delta t_i)(\Delta t_i)^2\mathbb{E}\Big|\Big(f(t_{i},X^{\pi}_{t_i},\mathcal{U}_{i}(X^{\pi}_{t_i};\theta_i),\mathcal{V}_{i}(X^{\pi}_{t_{i}};\theta_i)\Big)
-f(t_i,X_{t_{i}}^\pi,\mathcal{U}^{\pi}_{t_{i}},\mathcal{V}_{t_{i}}^{\pi})\Big|^2\\
&\leq \Big(1+\Delta t_i+K\Delta t_i+K(\Delta t_i)^2\Big)\mathbb{E}\Big|\mathcal{U}_{t_i}^{\pi}-\mathcal{U}_{i}(X^{\pi}_{t_i};\theta_{i})\Big|^2 +\Big(1+K+K\Delta t_i\Big)\Delta t_i\mathbb{E}\Big|\mathcal{V}_{t_i}^{\pi}-\mathcal{V}_{i}(X^{\pi}_{t_{i}};\theta_i)\Big|^2.
 \end{split}
\end{equation*}
Notice that $2K\Delta t_i a^2+\frac{1}{2K\Delta t_i}b^2+2ab=2K\Delta t_i\left(a+\frac{1}{2K\Delta t_i}b\right)^2\geq0$. Thus we have
\begin{equation*}
\begin{split}
L_{i}(\theta_{i})
&\geq (1-2K\Delta t_i)\mathbb{E}\Big|\mathcal{U}_{t_i}^{\pi}-\mathcal{U}_{i}(X^{\pi}_{t_i};\theta_{i})\Big|^2+\Delta t_i\mathbb{E}\Big|\mathcal{V}_{t_i}^{\pi}-\mathcal{V}_{i}(X^{\pi}_{t_{i}};\theta_i)\Big|^2\\
&\quad +\left(1-\frac{1}{2K\Delta t_i}\right)(\Delta t_i)^2\mathbb{E}\Big|\Big(f(t_{i},X^{\pi}_{t_i},\mathcal{U}_{i}(X^{\pi}_{t_i};\theta_i),\mathcal{V}_{i}(X^{\pi}_{t_{i}};\theta_i)\Big)
-f(t_i,X_{t_{i}}^\pi,\mathcal{U}^{\pi}_{t_{i}},\mathcal{V}_{t_{i}}^{\pi})\Big|^2\\
&\geq (1-2K\Delta t_i)\mathbb{E}\Big|\mathcal{U}_{t_i}^{\pi}-\mathcal{U}_{i}(X^{\pi}_{t_i};\theta_{i})\Big|^2+\Delta t_i\mathbb{E}\Big|\mathcal{V}_{t_i}^{\pi}-\mathcal{V}_{i}(X^{\pi}_{t_{i}};\theta_i)\Big|^2\\
&\quad -\frac{\Delta t_i}{2K}\mathbb{E}\Big|\Big(f(t_{i},X^{\pi}_{t_i},\mathcal{U}_{i}(X^{\pi}_{t_i};\theta_i),\mathcal{V}_{i}(X^{\pi}_{t_{i}};\theta_i)\Big)
-f(t_i,X_{t_{i}}^\pi,\mathcal{U}^{\pi}_{t_{i}},\mathcal{V}_{t_{i}}^{\pi})\Big|^2\\
&\geq \left(1-2K\Delta t_i-\frac{1}{2}\Delta t_i\right)\mathbb{E}\Big|\mathcal{U}_{t_i}^{\pi}-\mathcal{U}_{i}(X^{\pi}_{t_i};\theta_{i})\Big|^2+\frac{1}{2}\Delta t_i\mathbb{E}\Big|\mathcal{V}_{t_i}^{\pi}-\mathcal{V}_{i}(X^{\pi}_{t_{i}};\theta_i)\Big|^2.
  \end{split}
\end{equation*}
We take $\theta_i=\theta_i^*,$ and it is easy to verify that
\begin{equation*}
  \begin{split}
    \left(1-2K\Delta t_i-\frac{1}{2}\Delta t_i\right)\mathbb{E}\Big|\mathcal{U}_{t_i}^{\pi}-\mathcal{U}^*_{i}(X^{\pi}_{t_i})\Big|^2\leq L_{i}(\theta_{i}^*),
  \end{split}
\end{equation*}
which implies
\begin{equation*}
   \begin{split}
        &\quad \left(1-2K\Delta t_i-\frac{1}{2}\Delta t_i\right)\mathbb{E}\Big|\mathcal{U}_{t_i}^{\pi}-\mathcal{U}^*_{i}(X^{\pi}_{t_i})\Big|^2 \\
        &\leq L_{i}(\theta_{i})\leq (1+C\Delta t_i)\mathbb{E}\Big|\mathcal{U}_{t_i}^{\pi}-\mathcal{U}_{i}(X^{\pi}_{t_i};\theta_{i})\Big|^2+C\Delta t_i\mathbb{E}\Big|\mathcal{V}_{t_i}^{\pi}-\mathcal{V}_{i}(X^{\pi}_{t_{i}};\theta_i)\Big|^2.
   \end{split}
\end{equation*}
Substituting this into the right-hand side of \eqref{gronwall} yields
\begin{equation}
\begin{split}
&\quad \max\limits_{0\leq i\leq N-1}\mathbb{E}\Big|Y_{t_i}-\mathcal{U}^*_{i}(X^{\pi}_{t_i})\Big|^2\\
&\leq  C\sum_{i=0}^{N-1}\Bigg(\frac{c_0}{|\pi|}\mathbb{E}\Big|\mathcal{U}_{t_i}^{\pi}-\mathcal{U}_{i}(X^{\pi}_{t_i};\theta_{i})\Big|^2+\mathbb{E}\Big|
\mathcal{V}_{t_i}^{\pi}-\mathcal{V}_{i}(X^{\pi}_{t_{i}};\theta_i)\Big|^2\Bigg)\\
&\quad +C(1+|x|^2)|\pi|.
   \end{split}
\end{equation}
Consequently, we have
\begin{equation*}
   \begin{split}
&\quad \max\limits_{0\leq i\leq N-1}\mathbb{E}|Y_{t_i}-\mathcal{U}^*_{i}(X^{\pi}_{t_i})|^2\\
&\leq  C\sum_{i=0}^{N-1}\Bigg(\frac{c_0}{|\pi|}\inf_{\theta_{i}}\mathbb{E}\Big|\mathcal{U}_{t_i}^{\pi}-\mathcal{U}_{i}(X^{\pi}_{t_i};\theta_{i})\Big|^2+\inf_{\theta_{i}}\mathbb{E}
\Big|\mathcal{V}_{t_i}^{\pi}-\mathcal{V}_{i}(X^{\pi}_{t_{i}};\theta_i)\Big|^2\Bigg)\\
&\quad +C(1+|x|^2)|\pi|.
   \end{split}
\end{equation*}
The desired result follows by using an similar argument as in the proof of Theorem \ref{semibound}. The proof is complete.
\end{proof}

Theorem \ref{thm31} indicates that the approximation error of the proposed algorithm can be controlled by the neural network approximation errors $\mathbb{E}|\mathcal{U}_{t_i}^{\pi}-\mathcal{U}_{i}(X^{\pi}_{t_i};\theta_{i})|^2$ and $\mathbb{E}|\mathcal{V}_{t_i}^{\pi}-\mathcal{V}_{i}(X^{\pi}_{t_i};\theta_{i})|^2$. In addition, the number of parameters $n_{\mathcal{W}}$ used in the neural network grows at most polynomially in the space dimension $d$, similar as in \cite{MR4292849}.

\section{Numerical examples}

In this section, we shall present some numerical experiments. All our numerical tests are performed in Python using TensorFlow 2.7 on a laptop equipped with an Intel Core i5 Processor with 1.8GHz. We use fully connected layers and batch normalization \cite{pmlr-v37-ioffe15} after each matrix multiplication and before activation. We employ the Leaky Rectified Linear Unit function as the activation function for the hidden layers, and the identity function as the activation function for the output layer. For the optimization solver, we use Adam optimizer \cite{DBLP:journals/corr/KingmaB14} with the exponential decay rate of 0.9 for the first moment estimates, and the exponential decay rate of 0.999 for the second moment estimates. The training was on mini-batches with 64 trajectories of $X$  per batch for 100 epochs. All the weights in the network are initialized using the He initialization method \cite{He_2015_ICCV}. We use adaptive learning rate for the training process with a starting value of 0.01 and drop it by half if the loss doesn't decrease for 10 consecutive epochs.

We consider an example adapted from Section 4 in \cite{teng2021solving}, for which the parameters of the SPDE \eqref{spde} are chosen as: $k=1$, $l=1$, $\sigma=0.25$, $T=1$, $\mu(t,x)=\sigma\sqrt{d}\sin(x)$, $\sigma(t,x)=\sigma \sqrt{d}I_d$, $h(x)=\sqrt{d}\arctan(\frac{1}{d}\sum_{j=1}^d x_j)+\frac{\pi}{2}\sqrt{d}$, $f=-\sigma \sin(x)\nabla u$, and $g=-\sigma\sqrt{d}\sin^2(\frac{1}{\sqrt{d}}u)$. We also set $X_0\sim U(-0.2,0.2)$) to avoid overfitting. The associated exact solution reads:
\begin{equation*}
   \begin{split}
     u(t,X_t) & = \sqrt{d}\arctan\left(\frac{1}{d}\sum_{j=1}^d (X_t)_{j}-\sigma(B_T-B_t)\right)+\frac{\pi}{2}\sqrt{d}.
   \end{split}
\end{equation*}
We shall test our algorithm in four cases: the one-dimensional case, a moderate dimensional (5-dimensional) case, and two high-dimensional (50 and 100-dimensional) cases.

For the one-dimensional case, we discretize the equation using $N=32$ time steps. Each of the neural networks consists of 2 hidden layers with 11 nodes. The number of iterations per epoch is set to be 100. The simulation results are presented in Table 1. For each fixed sample path of $B$, we provide the average $\bar{\mathcal{U}}^*_{0}(X^{\pi}_{0})$ and the standard deviation of $\mathcal{U}^*_{0}(X^{\pi}_{t_0})$ by performing 5 independent runs of the algorithm. The relative error is calculated as $$\frac{|\bar{\mathcal{U}}^*_{0}(X^{\pi}_{t_0})-u(0,X_0)|}{u(0,X_0)}.$$ We show in Figs.1-5 a comparison between the exact solution and the approximate solutions at $t=0.03125$ for each realization of $B.$ The approximation errors are also presented in Table 1. It is clearly shown that the numerical solution match well with the exact solution.

\begin{table}[!ht]
    \centering
    \begin{tabular}{|l|l|l|l|}
    \hline
        Averaged Approx. & Exact solution & Standard deviation & Relative error \\ \hline
        1.5998764 & 1.596926235 & 0.004728534 & 0.001847402 \\ \hline
        1.9575781 & 1.967079301 & 0.002868398 & 0.004830106 \\ \hline
        1.8510513 & 1.847140899 & 0.001399888 & 0.002117002 \\ \hline
        1.8063872 & 1.808914966 & 0.004120083 & 0.001397394 \\ \hline
        1.4118652 & 1.403613807 & 0.00232844 & 0.005878677 \\ \hline
        \multicolumn{4}{|l|}{Relative $L^2$ error: 0.00368065398642483}  \\ \hline
    \end{tabular}
\caption{Numerical results for the one-dimensional case}
\end{table}
\begin{figure}[H]
  \centering
  \includegraphics[width=0.5\textwidth]{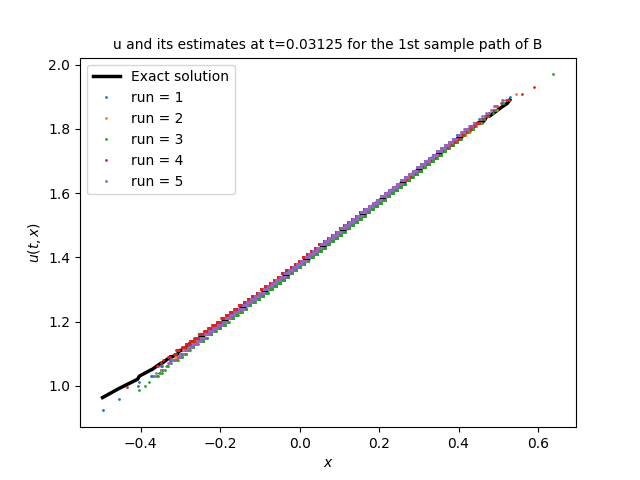}
  \caption{Comparison between the exact solution and the approximate solutions for the 1st sample path of $B$ ($d=1$)}
\end{figure}

\begin{figure}[H]
  \centering
  \includegraphics[width=0.5\textwidth]{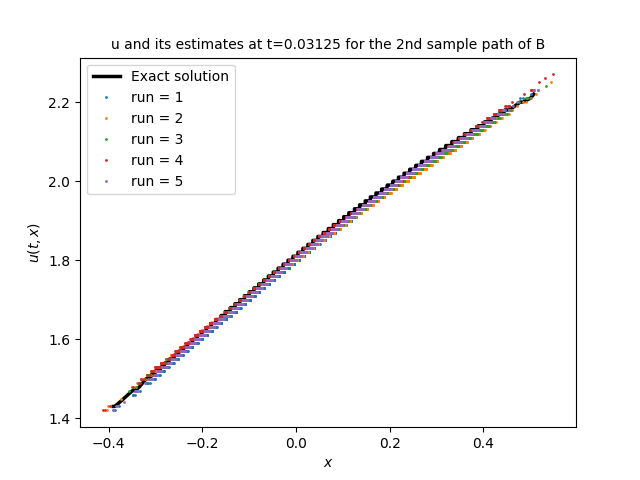}
  \caption{Comparison between the exact solution and the approximate solutions for the 2nd sample path of $B$ ($d=1$)}
\end{figure}

\begin{figure}[H]
  \centering
  \includegraphics[width=0.5\textwidth]{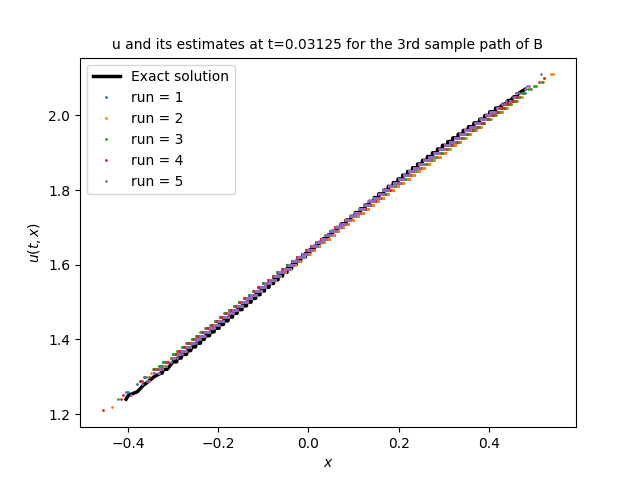}
  \caption{Comparison between the exact solution and the approximate solutions for the 3rd sample path of $B$ ($d=1$)}
\end{figure}

\begin{figure}[H]
  \centering
  \includegraphics[width=0.5\textwidth]{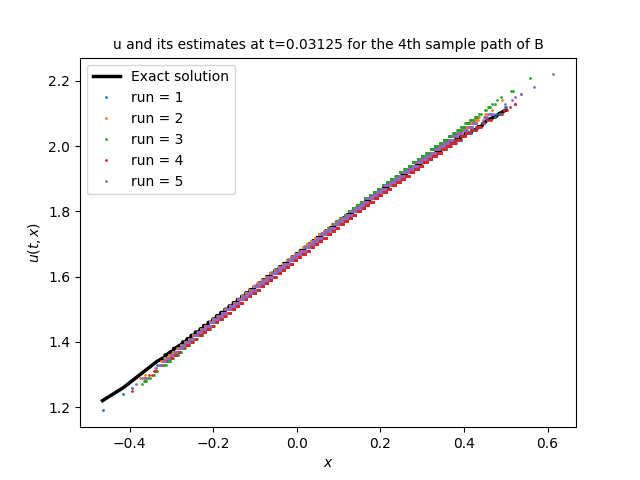}
  \caption{Comparison between the exact solution and the approximate solutions for the 4th sample path of $B$ ($d=1$)}
\end{figure}

\begin{figure}[H]
  \centering
  \includegraphics[width=0.5\textwidth]{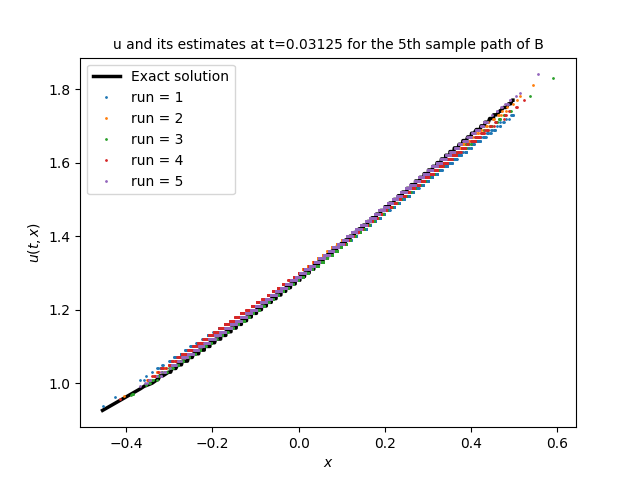}
  \caption{Comparison between the exact solution and the approximate solutions for the 5th sample path of $B$ ($d=1$)}
\end{figure}

Similarly, numerical results for the  5-dimensional, 50-dimensional and 100-dimensional cases are given in Tables 2-4, respectively. For these cases, the number of time steps is set to be 16. Each of the neural networks consists of 2 hidden layers, the number of nodes for each hidden layer is set to  be $d+10$ for 5-dimensional case, and $d+50$ for 50- and 100-dimensional cases. To illustrate the accuracy of the proposed algorithm, we also plot in Figures 6-10 the comparison between exact solution and the approximation solution of the 100-dimensional case as a function of $\frac{1}{100}\sum_{j=1}^{100} (X_t)_{j}$ at $t=0.0625$ for each realization of $B$ in Table 4.

\begin{table}[H]
    \centering
    \begin{tabular}{|l|l|l|l|}
    \hline
        Averaged Approx.  & Exact solution & Standard deviation & Relative error \\ \hline
        3.7816436 & 3.783391082 & 0.01756304 & 0.000461883 \\ \hline
        4.0550942 & 4.092778688 & 0.01245616 & 0.009207556 \\ \hline
        3.5838552 & 3.589354974 & 0.025385832 & 0.001532246 \\ \hline
        3.9931598 & 4.034977758 & 0.03498262 & 0.010363863 \\ \hline
        3.6655762 & 3.675363075 & 0.006242963 & 0.002662832 \\ \hline
        \multicolumn{4}{|l|}{Relative $L^2$ error:  0.00635359012791855 } \\ \hline
    \end{tabular}
\caption{Numerical results for the 5-dimensional case. }
\end{table}

\begin{table}[H]
    \centering
    \begin{tabular}{|l|l|l|l|}
    \hline
        Averaged Approx.  & Exact solution & Standard deviation & Relative error \\ \hline
        13.869227 & 13.80994108 & 0.0389653 & 0.004292989 \\ \hline
        11.789759 & 11.77779362 & 0.026603295 & 0.001015927 \\ \hline
        11.142225 & 11.12731684 & 0.02187582 & 0.00133978 \\ \hline
        8.79433 & 8.691944684 & 0.0351124 & 0.011779334 \\ \hline
        8.846188 & 8.819915756 & 0.02828248 & 0.002978741 \\ \hline
        \multicolumn{4}{|l|}{Relative $L^2$ error: 0.00581175481223784} \\ \hline
    \end{tabular}
\caption{Numerical results for the 50-dimensional case. }
\end{table}

\begin{table}[H]
    \centering
    \begin{tabular}{|l|l|l|l|}
    \hline
        Averaged Approx.  & Exact solution & Standard deviation & Relative error \\ \hline
        13.589519 & 13.50290922 & 0.06967104 & 0.006414157 \\ \hline
        10.588926 & 10.61844227 & 0.061020687 & 0.002779717 \\ \hline
        19.077723 & 18.69746132 & 0.048847217 & 0.02033761 \\ \hline
        14.551335 & 14.54826577 & 0.023958674 & 0.000210969 \\ \hline
        17.445202 & 17.53231125 & 0.022984497 & 0.004968498 \\ \hline
        \multicolumn{4}{|l|}{Relative $L^2$ error: 0.00987134339284379 } \\ \hline
    \end{tabular}
\caption{Numerical results for the 100-dimensional case. }
\end{table}

\begin{figure}[H]
  \centering
  \includegraphics[width=0.5\textwidth]{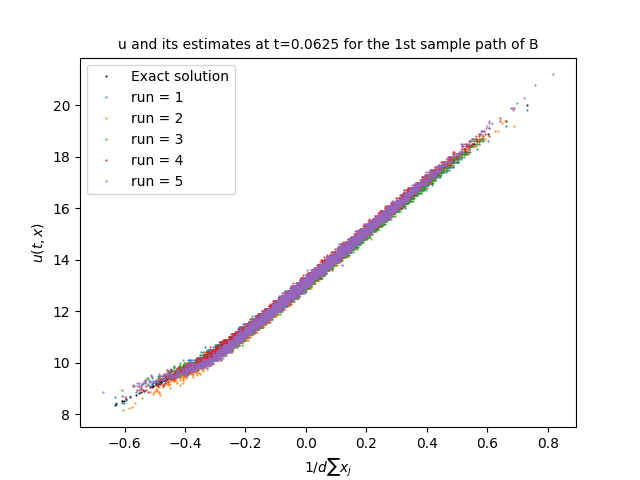}
  \caption{Comparison between the exact solution and the approximate solutions for the 1st sample path of $B$ ($d=100$)}
\end{figure}

\begin{figure}[H]
  \centering
  \includegraphics[width=0.5\textwidth]{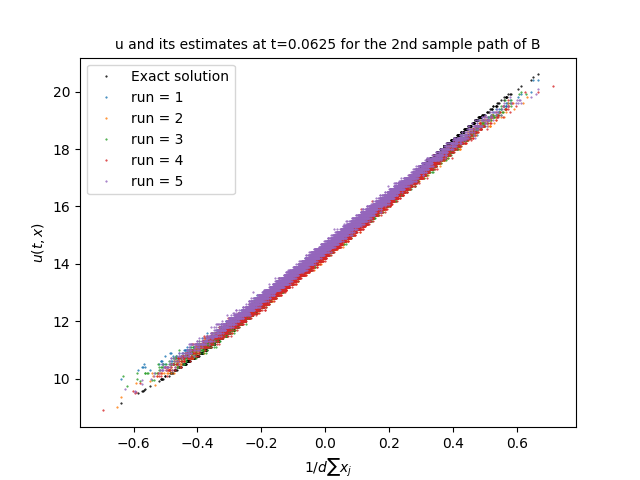}
  \caption{Comparison between the exact solution and the approximate solutions for the 2nd sample path of $B$ ($d=100$)}
\end{figure}

\begin{figure}[H]
  \centering
  \includegraphics[width=0.5\textwidth]{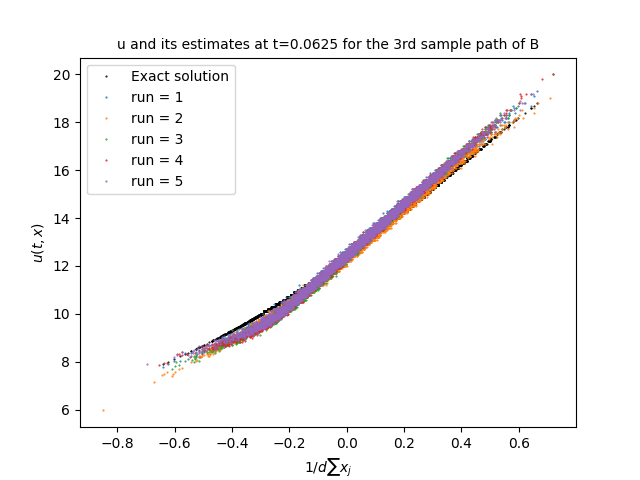}
  \caption{Comparison between the exact solution and the approximate solutions for the 3rd sample path of $B$ ($d=100$)}
\end{figure}
\begin{figure}[H]
  \centering
  \includegraphics[width=0.5\textwidth]{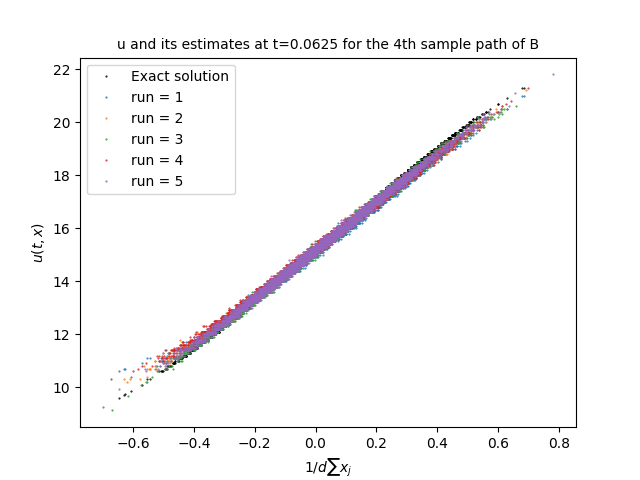}
  \caption{Comparison between the exact solution and the approximate solutions for the 4th sample path of $B$ ($d=100$)}
\end{figure}

\begin{figure}[H]
  \centering
  \includegraphics[width=0.5\textwidth]{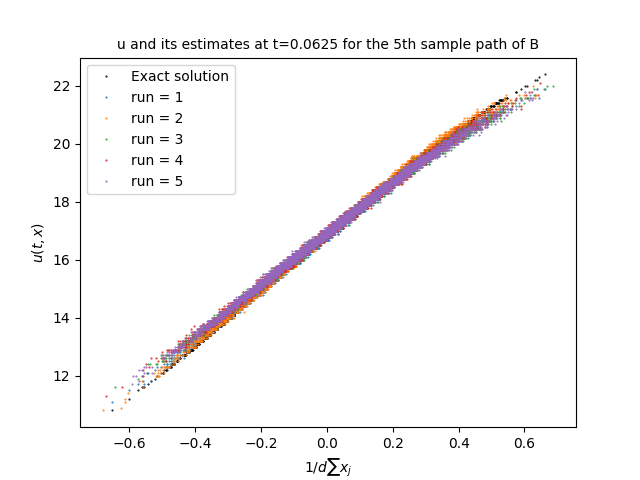}
  \caption{Comparison between the exact solution and the approximate solutions for the 5th sample path of $B$ ($d=100$)}
\end{figure}

\section{Conclusions}
We have proposed a predictor-corrector deep learning-based numerical method for solving high dimensional stochastic partial differential equations. At each time step, the original SPDE is first decomposed into a degenerate SPDE to serve as the prediction step, and a second-order deterministic PDE to serve as the correction step. The solution of the degenerate SPDE is then approximated by the Euler method, and the solution of the PDE is approximated by deep neural networks via the equivalent backward stochastic differential equation. The convergence analysis of the proposed algorithm is presented, and numerical examples are carried out to show the efficiency of the proposed algorithm. Future studies along this line include extending the current algorithm to solve nonlinear SPDEs and stochastic optimal control involving SPDEs.

\bibliography{deepspde}

\begin{thebibliography}{10}

\bibitem{dcds}
Richard Archibald, Feng Bao, Yanzhao Cao, and He~Zhang.
\newblock A backward sde method for uncertainty quantification in deep
  learning.
\newblock {\em Discrete and Continuous Dynamical Systems - S},
  15(10):2807--2835, 2022.

\bibitem{MR3538011}
Achref Bachouch, Mohamed~Anis Ben~Lasmar, Anis Matoussi, and Mohamed Mnif.
\newblock Euler time discretization of backward doubly {SDE}s and application
  to semilinear {SPDE}s.
\newblock {\em Stoch. Partial Differ. Equ. Anal. Comput.}, 4(3):592--634, 2016.

\bibitem{mr3483162}
Achref Bachouch, Emmanuel Gobet, and Anis Matoussi.
\newblock Empirical regression method for backward doubly stochastic
  differential equations.
\newblock {\em SIAM/ASA J. Uncertain. Quantif.}, 4(1):358--379, 2016.

\bibitem{MR1822898}
Vlad Bally and Anis Matoussi.
\newblock Weak solutions for {SPDE}s and backward doubly stochastic
  differential equations.
\newblock {\em J. Theoret. Probab.}, 14(1):125--164, 2001.

\bibitem{MR3356530}
Feng Bao, Yanzhao Cao, and Weidong Zhao.
\newblock A first order semi-discrete algorithm for backward doubly stochastic
  differential equations.
\newblock {\em Discrete Contin. Dyn. Syst. Ser. B}, 20(5):1297--1313, 2015.

\bibitem{beck2020deep}
Christian Beck, Sebastian Becker, Patrick Cheridito, Arnulf Jentzen, and Ariel
  Neufeld.
\newblock Deep learning based numerical approximation algorithms for stochastic
  partial differential equations and high-dimensional nonlinear filtering
  problems.
\newblock {\em arXiv preprint arXiv:2012.01194}, 2020.

\bibitem{MR3993178}
Christian Beck, Weinan E, and Arnulf Jentzen.
\newblock Machine learning approximation algorithms for high-dimensional fully
  nonlinear partial differential equations and second-order backward stochastic
  differential equations.
\newblock {\em J. Nonlinear Sci.}, 29(4):1563--1619, 2019.

\bibitem{MR1180784}
Alain Bensoussan and Roland Glowinski.
\newblock Approximation of {Z}akai equation by the splitting up method.
\newblock In {\em Stochastic systems and optimization ({W}arsaw, 1988)}, volume
  136 of {\em Lect. Notes Control Inf. Sci.}, pages 257--265. Springer, Berlin,
  1989.

\bibitem{MR1075210}
Alain Bensoussan, Roland Glowinski, and Aurel R\u{a}\c{s}canu.
\newblock Approximation of the {Z}akai equation by the splitting up method.
\newblock {\em SIAM J. Control Optim.}, 28(6):1420--1431, 1990.

\bibitem{mr1133253}
Alain Bensoussan, Roland Glowinski, and Aurel R\u{a}\c{s}canu.
\newblock Approximation of some stochastic differential equations by the
  splitting up method.
\newblock {\em Appl. Math. Optim.}, 25(1):81--106, 1992.

\bibitem{MR2056536}
Bruno Bouchard and Nizar Touzi.
\newblock Discrete-time approximation and {M}onte-{C}arlo simulation of
  backward stochastic differential equations.
\newblock {\em Stochastic Process. Appl.}, 111(2):175--206, 2004.

\bibitem{mr2646103}
Sonja Cox and Jan van Neerven.
\newblock Convergence rates of the splitting scheme for parabolic linear
  stochastic {C}auchy problems.
\newblock {\em SIAM J. Numer. Anal.}, 48(2):428--451, 2010.

\bibitem{mr4009384}
Giuseppe Da~Prato, Arnulf Jentzen, and Michael R\"{o}ckner.
\newblock A mild {I}t\^{o} formula for {SPDE}s.
\newblock {\em Trans. Amer. Math. Soc.}, 372(6):3755--3807, 2019.

\bibitem{MR3627592}
Weinan E.
\newblock A proposal on machine learning via dynamical systems.
\newblock {\em Commun. Math. Stat.}, 5(1):1--11, 2017.

\bibitem{MR4188517}
Weinan E.
\newblock Machine learning and computational mathematics.
\newblock {\em Commun. Comput. Phys.}, 28(5):1639--1670, 2020.

\bibitem{MR3736669}
Weinan E, Jiequn Han, and Arnulf Jentzen.
\newblock Deep learning-based numerical methods for high-dimensional parabolic
  partial differential equations and backward stochastic differential
  equations.
\newblock {\em Commun. Math. Stat.}, 5(4):349--380, 2017.

\bibitem{mr4356985}
Weinan E, Jiequn Han, and Arnulf Jentzen.
\newblock Algorithms for solving high dimensional {PDE}s: from nonlinear
  {M}onte {C}arlo to machine learning.
\newblock {\em Nonlinearity}, 35(1):278--310, 2022.

\bibitem{MR3946468}
Weinan E, Martin Hutzenthaler, Arnulf Jentzen, and Thomas Kruse.
\newblock On multilevel {P}icard numerical approximations for high-dimensional
  nonlinear parabolic partial differential equations and high-dimensional
  nonlinear backward stochastic differential equations.
\newblock {\em J. Sci. Comput.}, 79(3):1534--1571, 2019.

\bibitem{FZZ_2022}
Xiaodong Feng, Li~Zeng, and Tao Zhou.
\newblock {Solving time dependent Fokker-Planck equations via temporal
  normalizing flow}.
\newblock {\em Commu. Comput. Phys.}, 32(2):401--423, 2022.

\bibitem{MR3674067}
Yu~Fu, Weidong Zhao, and Tao Zhou.
\newblock Efficient spectral sparse grid approximations for solving
  multi-dimensional forward backward {SDE}s.
\newblock {\em Discrete Contin. Dyn. Syst. Ser. B}, 22(9):3439--3458, 2017.

\bibitem{mr4358471}
Maximilien Germain, Huy\^{e}n Pham, and Xavier Warin.
\newblock Approximation error analysis of some deep backward schemes for
  nonlinear {PDE}s.
\newblock {\em SIAM J. Sci. Comput.}, 44(1):A28--A56, 2022.

\bibitem{MR928351}
Alfredo Germani and Mauro Piccioni.
\newblock Semidiscretization of stochastic partial differential equations on
  {${\bf R}^d$} by a finite-element technique.
\newblock {\em Stochastics}, 23(2):131--148, 1988.

\bibitem{MR2152657}
Emmanuel Gobet, Jean-Philippe Lemor, and Xavier Warin.
\newblock A regression-based {M}onte {C}arlo method to solve backward
  stochastic differential equations.
\newblock {\em Ann. Appl. Probab.}, 15(3):2172--2202, 2005.

\bibitem{MR3449792}
Emmanuel Gobet and Plamen Turkedjiev.
\newblock Approximation of backward stochastic differential equations using
  {M}alliavin weights and least-squares regression.
\newblock {\em Bernoulli}, 22(1):530--562, 2016.

\bibitem{MR3454368}
Emmanuel Gobet and Plamen Turkedjiev.
\newblock Linear regression {MDP} scheme for discrete backward stochastic
  differential equations under general conditions.
\newblock {\em Math. Comp.}, 85(299):1359--1391, 2016.

\bibitem{MR1402994}
Wilfried Grecksch and Peter~E. Kloeden.
\newblock Time-discretised {G}alerkin approximations of parabolic stochastic
  {PDE}s.
\newblock {\em Bull. Austral. Math. Soc.}, 54(1):79--85, 1996.

\bibitem{MR1964941}
Istv\'{a}n Gy\"{o}ngy and Nicolai Krylov.
\newblock On the splitting-up method and stochastic partial differential
  equations.
\newblock {\em Ann. Probab.}, 31(2):564--591, 2003.

\bibitem{MR2729440}
Istv\'{a}n Gy\"{o}ngy and Nicolai Krylov.
\newblock Accelerated finite difference schemes for linear stochastic partial
  differential equations in the whole space.
\newblock {\em SIAM J. Math. Anal.}, 42(5):2275--2296, 2010.

\bibitem{MR1480861}
Istv\'{a}n Gy\"{o}ngy and David Nualart.
\newblock Implicit scheme for stochastic parabolic partial differential
  equations driven by space-time white noise.
\newblock {\em Potential Anal.}, 7(4):725--757, 1997.

\bibitem{MR3847747}
Jiequn Han, Arnulf Jentzen, and Weinan E.
\newblock Solving high-dimensional partial differential equations using deep
  learning.
\newblock {\em Proc. Natl. Acad. Sci. USA}, 115(34):8505--8510, 2018.

\bibitem{He_2015_ICCV}
Kaiming He, Xiangyu Zhang, Shaoqing Ren, and Jian Sun.
\newblock Delving deep into rectifiers: Surpassing human-level performance on
  imagenet classification.
\newblock In {\em Proceedings of the IEEE International Conference on Computer
  Vision (ICCV)}, December 2015.

\bibitem{mr3901645}
Pierre Henry-Labord\`ere, Nadia Oudjane, Xiaolu Tan, Nizar Touzi, and Xavier
  Warin.
\newblock Branching diffusion representation of semilinear {PDE}s and {M}onte
  {C}arlo approximation.
\newblock {\em Ann. Inst. Henri Poincar\'{e} Probab. Stat.}, 55(1):184--210,
  2019.

\bibitem{MR4092406}
Yaozhong Hu, David Nualart, and Xiaoming Song.
\newblock An implicit numerical scheme for a class of backward doubly
  stochastic differential equations.
\newblock {\em Stochastic Process. Appl.}, 130(6):3295--3324, 2020.

\bibitem{MR4402262}
Jianguo Huang, Haoqin Wang, and Tao Zhou.
\newblock An augmented {L}agrangian deep learning method for variational
  problems with essential boundary conditions.
\newblock {\em Commun. Comput. Phys.}, 31(3):966--986, 2022.

\bibitem{MR4081911}
C\^{o}me Hur\'{e}, Huy\^{e}n Pham, and Xavier Warin.
\newblock Deep backward schemes for high-dimensional nonlinear {PDE}s.
\newblock {\em Math. Comp.}, 89(324):1547--1579, 2020.

\bibitem{MR4292849}
Martin Hutzenthaler, Arnulf Jentzen, Thomas Kruse, and Tuan~Anh Nguyen.
\newblock A proof that rectified deep neural networks overcome the curse of
  dimensionality in the numerical approximation of semilinear heat equations.
\newblock {\em Partial Differ. Equ. Appl.}, 1(2):Paper No. 10, 34, 2020.

\bibitem{pmlr-v37-ioffe15}
Sergey Ioffe and Christian Szegedy.
\newblock Batch normalization: Accelerating deep network training by reducing
  internal covariate shift.
\newblock In Francis Bach and David Blei, editors, {\em Proceedings of the 32nd
  International Conference on Machine Learning}, volume~37 of {\em Proceedings
  of Machine Learning Research}, pages 448--456, Lille, France, 07--09 Jul
  2015. PMLR.

\bibitem{MR2578878}
Arnulf Jentzen and Peter~E. Kloeden.
\newblock The numerical approximation of stochastic partial differential
  equations.
\newblock {\em Milan J. Math.}, 77:205--244, 2009.

\bibitem{mr2642885}
Arnulf Jentzen and Peter~E. Kloeden.
\newblock Taylor expansions of solutions of stochastic partial differential
  equations with additive noise.
\newblock {\em Ann. Probab.}, 38(2):532--569, 2010.

\bibitem{mr3320928}
Arnulf Jentzen and Michael R\"{o}ckner.
\newblock A {M}ilstein scheme for {SPDE}s.
\newblock {\em Found. Comput. Math.}, 15(2):313--362, 2015.

\bibitem{DBLP:journals/corr/KingmaB14}
Diederik~P. Kingma and Jimmy Ba.
\newblock Adam: {A} method for stochastic optimization.
\newblock In Yoshua Bengio and Yann LeCun, editors, {\em 3rd International
  Conference on Learning Representations, {ICLR} 2015, San Diego, CA, USA, May
  7-9, 2015, Conference Track Proceedings}, 2015.

\bibitem{mr1214374}
Peter~E. Kloeden and Eckhard Platen.
\newblock {\em Numerical solution of stochastic differential equations},
  volume~23 of {\em Applications of Mathematics (New York)}.
\newblock Springer-Verlag, Berlin, 1992.

\bibitem{MR2391779}
Nicolai Krylov and Boris Rozovskii.
\newblock Stochastic evolution equations.
\newblock In {\em Stochastic differential equations: theory and applications},
  volume~2 of {\em Interdiscip. Math. Sci.}, pages 1--69. World Sci. Publ.,
  Hackensack, NJ, 2007.

\bibitem{MR1176783}
Fran\c{c}ois Le~Gland.
\newblock Splitting-up approximation for {SPDE}s and {SDE}s with application to
  nonlinear filtering.
\newblock In {\em Stochastic partial differential equations and their
  applications ({C}harlotte, {NC}, 1991)}, volume 176 of {\em Lect. Notes
  Control Inf. Sci.}, pages 177--187. Springer, Berlin, 1992.

\bibitem{MR2694901}
Sergey~Vladimir Lototsky.
\newblock {\em Problems in statistics of stochastic differential equations}.
\newblock ProQuest LLC, Ann Arbor, MI, 1996.
\newblock Thesis (Ph.D.)--University of Southern California.

\bibitem{mr1704232}
Jin Ma and Jiongmin Yong.
\newblock {\em Forward-backward stochastic differential equations and their
  applications}, volume 1702 of {\em Lecture Notes in Mathematics}.
\newblock Springer-Verlag, Berlin, 1999.

\bibitem{mr1936598}
Jin Ma and Jianfeng Zhang.
\newblock Representation theorems for backward stochastic differential
  equations.
\newblock {\em Ann. Appl. Probab.}, 12(4):1390--1418, 2002.

\bibitem{mr2521279}
Grigori Milstein and Michael Tretyakov.
\newblock Solving parabolic stochastic partial differential equations via
  averaging over characteristics.
\newblock {\em Math. Comp.}, 78(268):2075--2106, 2009.

\bibitem{MR1358091}
Noriaki Nagase.
\newblock Remarks on nonlinear stochastic partial differential equations: an
  application of the splitting-up method.
\newblock {\em SIAM J. Control Optim.}, 33(6):1716--1730, 1995.

\bibitem{MR1176785}
\'{E}tienne Pardoux and Shige Peng.
\newblock Backward stochastic differential equations and quasilinear parabolic
  partial differential equations.
\newblock In {\em Stochastic partial differential equations and their
  applications ({C}harlotte, {NC}, 1991)}, volume 176 of {\em Lect. Notes
  Control Inf. Sci.}, pages 200--217. Springer, Berlin, 1992.

\bibitem{MR1258986}
\'{E}tienne Pardoux and Shige Peng.
\newblock Backward doubly stochastic differential equations and systems of
  quasilinear {SPDE}s.
\newblock {\em Probab. Theory Related Fields}, 98(2):209--227, 1994.

\bibitem{mr4322044}
Huy\^{e}n Pham, Xavier Warin, and Maximilien Germain.
\newblock Neural networks-based backward scheme for fully nonlinear {PDE}s.
\newblock {\em Partial Differ. Equ. Appl.}, 2(1):Paper No. 16, 24, 2021.

\bibitem{MR3881695}
M.~Raissi, P.~Perdikaris, and G.~E. Karniadakis.
\newblock Physics-informed neural networks: a deep learning framework for
  solving forward and inverse problems involving nonlinear partial differential
  equations.
\newblock {\em J. Comput. Phys.}, 378:686--707, 2019.

\bibitem{raissi2018forward}
Maziar Raissi.
\newblock Forward-backward stochastic neural networks: {D}eep learning of
  high-dimensional partial differential equations.
\newblock {\em arXiv preprint arXiv:1804.07010}, 2018.

\bibitem{MR3874585}
Justin Sirignano and Konstantinos Spiliopoulos.
\newblock D{GM}: a deep learning algorithm for solving partial differential
  equations.
\newblock {\em J. Comput. Phys.}, 375:1339--1364, 2018.

\bibitem{teng2021solving}
Bin Teng, Yufeng Shi, and Qingfeng Zhu.
\newblock {Solving high-dimensional forward-backward doubly SDEs and their
  related SPDEs through deep learning}.
\newblock {\em Personal and Ubiquitous Computing}, pages 1--8, 2021.

\bibitem{MR2136207}
John~Bradstreet Walsh.
\newblock Finite element methods for parabolic stochastic {PDE}'s.
\newblock {\em Potential Anal.}, 23(1):1--43, 2005.

\bibitem{MR4135389}
Jie Yang, Weidong Zhao, and Tao Zhou.
\newblock A unified probabilistic discretization scheme for {FBSDE}s:
  stability, consistency, and convergence analysis.
\newblock {\em SIAM J. Numer. Anal.}, 58(4):2351--2375, 2020.

\bibitem{MR1654030}
Hyek Yoo.
\newblock Semi-discretization of stochastic partial differential equations on
  {${\bf R}^1$} by a finite-difference method.
\newblock {\em Math. Comp.}, 69(230):653--666, 2000.

\bibitem{MR2023027}
Jianfeng Zhang.
\newblock A numerical scheme for {BSDE}s.
\newblock {\em Ann. Appl. Probab.}, 14(1):459--488, 2004.

\bibitem{mr2552079}
Weidong Zhao, Jinlei Wang, and Shige Peng.
\newblock Error estimates of the {$\theta$}-scheme for backward stochastic
  differential equations.
\newblock {\em Discrete Contin. Dyn. Syst. Ser. B}, 12(4):905--924, 2009.

\end{thebibliography}
\nocite{*}
\bibliographystyle{plain}

\end{document}